\numberwithin{equation}{subsection}
\theoremstyle{plain}
\newtheorem{thm}{THEOREM}[subsection]
\newtheorem{definition}[thm]{DEFINITION}
\newtheorem{corol}[thm]{COROLLARY}
\newtheorem{prove}{Proof of Theorem}[subsection]
\newtheorem{lemma}[thm]{LEMMA}
\newtheorem{prop}[thm]{PROPOSITION}
\theoremstyle{definition}
\newtheorem{rem}[thm]{REMARK}
\theoremstyle{break}
\newtheorem{lemma-1}[thm]{LEMMA}
\newcommand{\F}{\mathcal{F}}
\newcommand{\K}{\mathcal{K}}
\newcommand{\R}{\mathbb{R} \backslash \{0\}}
\title{\bf K-theory for the Leaf Space of Foliations Formed by
the Generic K-orbits of a class of Solvable Real Lie Groups{\footnote{This paper was supported by the University of Economics and Law, Vietnam National University-Ho Chi Minh City and the University of Education at Ho Chi Minh City.}}}
\author{{\bf Le Anh} ${\bf Vu}^1$,\, {\bf Duong Quang} ${\bf Hoa}^2$
and {\bf Nguyen Anh} ${\bf Tuan}^3$\\
${}^1$\footnotesize{Department of Mathematics and Economic Statistics, University of Economics and Law}\\
\footnotesize{Vietnam National University - Ho Chi Minh City, Viet nam}\\
\footnotesize{E-mail:\, vula@uel.edu.vn}\\
${}^2$ \footnotesize{E-mail:\, duongquanghoabt@yahoo.com.vn}\\
${}^3$ \footnotesize{E-mail:\, natuan@upes.edu.vn}}
\date{}
\DeclareMathOperator{\Ext}{Ext}
\DeclareMathOperator{\Hom}{Hom}
\begin{document}
\maketitle
\begin{abstract}
The paper is a continuation of the works \cite{VU-SH} of Vu and Shum, \cite{VU-HO09} and \cite{VU-HO10} of Vu and Hoa. In \cite{VU-SH}, Vu and Shum classified all the MD5-algebras having commutative derived ideals. In \cite{VU-HO09}, Vu and Hoa considered foliations formed by the maximal dimensional K-orbits (for short, MD5-foliations) of connected MD5-groups such that their Lie algebras have 4-dimensional commutative derived ideals and gave a topological classification of the considered foliations. In \cite{VU-HO10}, Vu and Hoa characterized the Connes' C*-algebras of some MD5-foliations considered in \cite{VU-HO09} by the method of K-functors. In this paper, we study K-theory for the leaf space of all MD5-foliations which are classified in \cite{VU-HO09} and characterize the Connes' C*-algebras of them by the method of K-functors.
\end{abstract}

{\bf Keywords:} \,\footnotesize{\bf {Connes' C*-algebra, K-orbits, K-theory, MD5-group, Measured foliation}.}

{\bf MSC(2000):} \,\footnotesize{\bf Primary 16E20, 19K35; \,Secondary 22E45, 46L80, 46M20.}

\section*{INTRODUCTION}

\hskip6mm In the decades 1970s -- 1980s, works of D.N. Diep \cite{DI75}, J. Rosenberg \cite{RO82}, G. G. Kasparov \cite{KAS}, V. M. Son and H. H. Viet \cite{SO-VI},\ldots have shown that K-functors are well adapted to characterize a large class of group C*-algebras. Kirillov's method of orbits allows to find out the class of Lie groups MD, for which the group C*-algebras can be characterized by means of suitable K-functors. In terms of D. N. Diep, an MD-group of dimension n (for short, an MDn-group) is an n-dimensional solvable real Lie group whose orbits in the co-adjoint representation (i.e., the K- representation) are the orbits of zero or maximal dimension. The Lie algebra of an MDn-group is called an MDn-algebra (see \cite{DI99}).

In 1982, studying foliated manifolds, A. Connes \cite{CO82} introduced the notion of C*-algebra $C^{*}(V, \F)$ associated to a measured foliation $(V, \F)$. In general, the space of leaves $V/\F$ of a foliation $(V, \F)$ with the quotient topology is a fairly untractable topological space. The Connes' C*-algebra $C^{*}(V, \F)$ represents the leaf space $V/\F$ \,of\, $(V, \F)$ in the following sense: When the foliation $(V, \F)$ comes from a fibration (with connected fibers) $p: V \rightarrow B$ on some locally compact basis $B$, in particular the leaf space of $(V, \F)$ in this case is the basis $B$, then $C^{*}(V, \F)$ is isomorphic to ${C_0}(B)\otimes \K$, where $\K = \K({\mathcal{H}})$ denotes the C*-algebra of compact operators on an (infinite dimensional) separable Hilbert space ${\mathcal{H}}$ (see \cite{TO}). For such foliations $(V, \F)$, $K_{*}{\left( C^*(V, \F)\right)}$ coincides with the K-theory of the leaf space $B = V/\F$. Therefore, for an arbitrary foliation $(V, \F)$, A. Connes \cite{CO82} used $K_{*}{\left( C^*(V, \F)\right)}$ to define the K-theory for the leaf space of $(V, \F)$. 

The following question naturally arises: Can we characterize Connes' C*-algebras $C^*(V, \F)$ by using K-functors? In fact, A. M. Torpe \cite{TO} has shown that the method of K-functors is very useful and effective to characterize the structure of Connes C*-algebras associated with the foliations by Reeb components.

Now, we will deal with the class of MD-groups and MD-algebras. 
One of the reasons for studying the class MD is based on the following
fact: for every MD-group $G$, the family of all K-orbits having maximal
dimension forms a measured foliation in terms of Connes \cite{CO82}. This foliation is called MD-foliation associated with $G$. The C*-algebra
of $G$ can be easily characterized when the Connes' C*-algebra
of MD-foliation associated with $G$ is known. Hence, the problem of classifying the topology and describing Connes' C*-algebras of the class of MD-foliations is worth to study. Combining methods of Kirillov (see \cite{KI}) and Connes (see \cite{CO82}), Vu had studied MD4-foliations associated with all indecomposable connected MD4-groups and characterized Connes' C*-algebras of these foliations (see \cite{VU90-2}). Recently, Vu and Shum \cite{VU-SH} have classified, up to isomorphism, all the MD5-algebras having commutative derived ideals. In this classification, there are 14 families indecomposable MD5-algebras having 4-dimensional commutative derived ideals. So we obtain 14 families corresponding MD5-foliations.

In 2009, Vu and Hoa \cite{VU-HO09} gave a topological classification of 14 families of the corresponding MD5-foliations. Though there are 14 families considered MD5-foliations, there are only exactly 3 types of foliated topologies which are denoted by $\F_1, \,\F_2, \,\F_3$. All MD5-foliations of the type $\F_1$ are trivial fibrations with connected fibres on 3-dimesional sphere $S^3$ (see \cite{VU-HO09}), so Connes' C*-algebras of them are isomorphic to the C*-algebra $C(S^3) \otimes \K$ following \cite{CO82}. 

All MD5-foliations of the types $\F_2, \,\F_3$ are non-trivial and cannot come from fibrations. But these foliations can be given by suitable actions of ${\mathbb{R}}^2$ on the foliated manifolds (see \cite{VU-HO09}). In \cite{VU-HO10}, Vu and Hoa characterized Connes' C*-algebras associated to MD5-foliations of the type $\F_2$ by KK-functors. 

The purpose of this paper is to study K-theory for the leaf space and to characterize the structure of Connes' C*-algebras $C^*(V,\F)$ of all MD5-foliations $(V,\F)$  of the remaining type $\F_3$ by the method of K-functors. However, for the sake of completeness, we will consider concurrently both these types $\F_2, \,\F_3$ in this paper.

The paper is organized as follows: The first section deals with some preliminary notions and results. In particular, we recall in this section the main results from \cite{VU-HO09} and \cite{VU-HO10} on the classification of all MD5-algebras having 4-dimensional commutative derived ideals and the topological classifications of MD5-foliations associated to all of indecomposable connected MD5-groups which correspond with classified Lie algebras. Section 2 is devoted to study K-theory for the leaf space of all MD5-foliations of both these non-trivial types $\F_2, \,\F_3$. The main results of the paper are Theorems 2.1.1 and 2.2.3 in Section 2.

\section{PRELIMINARIES}

\hskip6mm In this section, we recall some notations and preliminary results which will be used later. Firstly, we recall some properties of Connes' C*-algebras in the first subsection. 

\subsection{Some Properties of Connes' C*-algebras}

\hskip6mm For a given foliation $(V, \F)$, the C*-algebra $C^*(V, \F)$ is generated by certain functions on the graph (or holonomy groupoid) $G$ (see \cite[Section 2]{TO}) of the foliations $(V, \F)$. The construction of $C^*(V, \F)$ is fairly complicated, so we do not recall this construction here and refer the reader to the work \cite[Sects. 5-6]{CO82} of A. Connes.
\begin{prop}[see {\bf \cite[Proposition 2.1.4]{TO}}]
Topologically equivalent foliations yield isomorphic C*-algebras. \hfill{$\square$}
\end{prop}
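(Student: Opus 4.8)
The plan is to unwind the construction of $C^*(V,\F)$ from the holonomy groupoid and to show that a topological equivalence induces an isomorphism at each stage of that construction. Recall that two foliations $(V_1,\F_1)$ and $(V_2,\F_2)$ are topologically equivalent when there is a homeomorphism $\varphi\colon V_1\to V_2$ mapping each leaf of $\F_1$ onto a leaf of $\F_2$ (and, compatibly with the smooth structures, restricting to a diffeomorphism along leaves). First I would observe that $\varphi$ restricts to a homeomorphism of each leaf $L$ of $\F_1$ onto the corresponding leaf $\varphi(L)$ of $\F_2$, so it sends a leafwise path to a leafwise path. Since holonomy is determined by the germ of the foliation transverse to such a path, which is a purely topological datum interchanged by $\varphi$, the map $\varphi$ carries holonomy-equivalent paths to holonomy-equivalent paths.

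The second step is to construct the induced groupoid map. I would define $\Phi\colon G_1\to G_2$ on holonomy classes by $\Phi[\gamma]=[\varphi\circ\gamma]$, then verify that $\Phi$ is well defined, that it is a bijection with inverse induced by $\varphi^{-1}$, and that it intertwines the source, range, multiplication, and inversion maps of the two holonomy groupoids; that is, $\Phi$ is a groupoid isomorphism. One must also check that $\Phi$ is a homeomorphism for the natural (graph) topologies: the distinguished charts of $G_i$ are built from the foliation charts of $(V_i,\F_i)$, which $\varphi$ interchanges, so continuity of both $\Phi$ and $\Phi^{-1}$ follows.

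The third step is to transport the function algebra. The algebra $C^*(V,\F)$ is the completion of the convolution $*$-algebra $C_c(G,\Omega^{1/2})$ of compactly supported continuous half-densities on $G$. The homeomorphism $\Phi$ pulls back the half-density bundle $\Omega^{1/2}$ of $G_2$ to that of $G_1$, and the assignment $f\mapsto(\Phi^{-1})^{*}f$ gives a linear bijection $C_c(G_1,\Omega^{1/2})\to C_c(G_2,\Omega^{1/2})$. The crucial point — and the step I expect to be the main obstacle — is to verify that this bijection respects the convolution product and the involution. The convolution integrates over the fibres $s^{-1}(x)$ against the groupoid's Haar system, and under $\varphi$ these fibres are distorted by the leafwise Jacobian of $\varphi$; the virtue of phrasing everything in terms of half-densities is precisely that the half-density transformation law absorbs this Jacobian, so that the identity $(\Phi^{-1})^{*}(f)\ast(\Phi^{-1})^{*}(g)=(\Phi^{-1})^{*}(f\ast g)$ holds with no extra cocycle correction. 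Granting this, together with the analogous check for the adjoint, the map is a $*$-isomorphism of the dense convolution subalgebras.

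Finally, a $*$-isomorphism between the dense convolution $*$-subalgebras is isometric for the respective C*-norms, since those norms are intrinsic to the $*$-algebra structure (being defined as suprema over $*$-representations); hence it extends uniquely to a $*$-isomorphism of the completions, yielding $C^*(V_1,\F_1)\cong C^*(V_2,\F_2)$. This is exactly the isomorphism asserted in the proposition.
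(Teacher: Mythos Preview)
The paper does not actually prove this proposition: it is quoted from Torpe \cite[Proposition 2.1.4]{TO} and closed immediately with a $\square$. So there is no ``paper's own proof'' to compare against; the statement is treated as a known preliminary fact.

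Your sketch is the standard route to this result and is essentially correct. A couple of points are worth tightening. First, be careful with the meaning of ``topologically equivalent'': if $\varphi$ is merely a homeomorphism, pulling back the leafwise half-density bundle is not automatic, since half-densities are built from the leafwise tangent bundle. Your parenthetical assumption that $\varphi$ is a diffeomorphism along leaves is exactly what makes the third step go through, and you should state it as a hypothesis rather than an aside. Second, in the final step you invoke the full $C^*$-norm (supremum over all $*$-representations), but Connes' $C^*(V,\F)$ is the \emph{reduced} completion, built from the regular representations on $L^2(G_x,\Omega^{1/2})$. The fix is straightforward: since $\Phi$ carries source fibres to source fibres and intertwines the half-density measures, it conjugates the regular representations of $G_1$ with those of $G_2$, so the reduced norms coincide under your $*$-isomorphism. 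With these two clarifications your argument is complete.
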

\begin{prop}[see {\bf \cite[Proposition 2.1.5]{TO}}] Assume that the foliation $(V, \F)$ comes from an action of a Lie group $H$ in such a way that the graph G is given as $V \times H$. Then $C^*(V, \F)$ is isomorphic to the reduced crossed product $C_0(V)\rtimes H$ of $C_0(V)$ by $H$. \hfill{$\square$}
\end{prop}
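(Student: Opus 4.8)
The plan is to unwind Connes' construction of $C^*(V, \F)$ from \cite[Sects. 5--6]{CO82} and match it, term by term, with the convolution picture of the reduced crossed product. First I would recall that $C^*(V, \F)$ is defined as the completion of the $*$-algebra $C_c^\infty(G, \Omega^{1/2})$ of compactly supported smooth half-densities on the graph $G$, equipped with the convolution product and involution dictated by the groupoid structure of $G$, the completion being taken with respect to the norm supplied by the family of regular representations on the $L^2$-spaces of the leaves.

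The decisive step is to exploit the hypothesis $G \cong V \times H$: under this identification the graph is precisely the transformation (action) groupoid of $H$ acting on $V$, with range and source maps $r(x,h)=x$, $s(x,h)=x\cdot h$, composition $(x,h)\cdot(x\cdot h, h')=(x,hh')$, and unit space $V \times \{e\}$. Since $H$ is a Lie group it carries a left Haar measure, and the tangent bundle along the $H$-fibres of $G$ is globally trivialized by the associated Haar system; combined with a choice of transverse half-density this trivializes the bundle $\Omega^{1/2}$ over $G$. I would then show that this trivialization turns $C_c^\infty(G,\Omega^{1/2})$ into $C_c(H, C_0(V))$, and, more importantly, that Connes' groupoid convolution becomes exactly the twisted convolution $(f * g)(x,h)=\int_H f(x,h')\, g(x\cdot h',\, h'^{-1}h)\,d h'$ that defines the algebraic crossed product, with the involution matching as well.

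Finally I would compare the two completions. After the identification above, Connes' regular representation of $C_c^\infty(G,\Omega^{1/2})$ on the $L^2$-space of the leaf through $x$ is unitarily equivalent to the representation of $C_c(H,C_0(V))$ induced from the point $x$, that is, the representation on $L^2(H)$ defining the \emph{reduced} crossed product $C_0(V)\rtimes_r H$. Taking the supremum of the norms over all leaves (equivalently over $x \in V$) shows that the two C*-norms coincide, and passing to completions yields the asserted isomorphism $C^*(V,\F)\cong C_0(V)\rtimes H$.

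The hard part will be the bookkeeping in the second step: trivializing $\Omega^{1/2}$ equivariantly and verifying that the modular function of $H$ is absorbed correctly, so that Connes' involution matches the crossed-product involution exactly. The unimodular case is immediate, but in general one must keep track of the square root of the modular function that the half-density formalism is designed to carry; getting this factor right is what makes the two convolution algebras, and hence their reduced completions, genuinely isomorphic rather than merely Morita equivalent.
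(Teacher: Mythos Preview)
The paper does not prove this statement at all: it is quoted as Proposition 1.1.2 with a reference to \cite[Proposition 2.1.5]{TO} and closed immediately with a $\square$, so there is no in-paper argument to compare against. Your outline is the standard argument behind this result and is essentially correct --- identifying the holonomy groupoid with the transformation groupoid $V\rtimes H$, trivializing $\Omega^{1/2}$ via the Haar system, matching groupoid convolution with the twisted convolution of $C_c(H,C_0(V))$, and then checking that the leafwise regular representations coincide with the induced representations defining the reduced crossed product. Your caution about the modular function in the non-unimodular case is well placed; that bookkeeping is exactly what the half-density formalism is designed to absorb, and once it is tracked through, the $*$-algebra structures and the reduced norms agree on the nose.
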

\begin{prop}[see {\bf \cite[Proposition 2.1.6]{TO}}] Assume that the foliation $(V, \F)$ is given by a fibration (with connected fibers) $p: V \rightarrow B$. Then the graph 
$G = \lbrace (x, y) \in V \times V \,\vert \,p(x) = p(y) \rbrace $ which is a submanifold of $V \times V$ and $C^*(V, \F)$ is isomorphic to 
$C_0(B)\otimes \K$. \hfill{$\square$}
\end{prop}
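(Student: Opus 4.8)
The plan is to identify the holonomy groupoid (graph) of $(V,\F)$ explicitly and then recognize the resulting convolution C*-algebra as a globally trivial continuous field of elementary C*-algebras over $B$. First I would check that the leaves of $(V,\F)$ are exactly the fibers: since $p$ is a fibration, hence a submersion, each fiber $p^{-1}(b)$ is an integral manifold of $\F$, and by the hypothesis that the fibers are connected each fiber is a single leaf; as distinct fibers are disjoint, the leaf partition coincides with the fiber partition. I would then compute the graph using the key geometric fact that a fibration carries trivial holonomy. Locally $p$ is the projection $U \times F \to U$; the plaques are the slices over points of $U$, a transversal is a section of the form $U \times \{f\}$, and transporting such a transversal along any path inside a leaf is the identity map. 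Consequently two points $x,y$ determine a single element of the graph precisely when they lie on a common leaf, i.e. when $p(x)=p(y)$, so $G = \{(x,y)\in V\times V : p(x)=p(y)\} = V\times_B V$. That $G$ is a submanifold follows from transversality: $(p\times p)\colon V\times V \to B\times B$ is a submersion and $G = (p\times p)^{-1}(\Delta_B)$ is the preimage of the diagonal.

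Next I would analyze the convolution structure. By construction $C^*(V,\F)$ is the C*-completion of the $*$-algebra of compactly supported (half-density valued) functions on $G$ with the convolution product. Fixing $b\in B$, the portion of $G$ lying over the diagonal point $b$ is the pair groupoid $p^{-1}(b)\times p^{-1}(b)$, and on it the convolution product is exactly composition of integral kernels; hence the fiber C*-algebra at $b$ is $\K\bigl(L^2(p^{-1}(b))\bigr)$, the compact operators on the $L^2$-space of the fiber. This exhibits $C^*(V,\F)$ as the section algebra of a continuous field over $B$ with all fibers isomorphic to $\K$.

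Finally I would globalize. Choosing a local trivialization $p^{-1}(U)\cong U\times F$, the restriction of $C^*(V,\F)$ over $U$ is identified with $C_0(U)\otimes \K(L^2(F))$; because the transition data act by conjugation by unitaries on the kernels and $\K$ is stable, these local identifications patch to a single isomorphism $C^*(V,\F)\cong C_0(B)\otimes \K$. The hard part will be the careful bookkeeping with the half-density bundle and, above all, verifying that the continuous field is globally (not merely locally) trivial, together with checking that the full and reduced C*-norms agree — which holds here because $V\times_B V$ is fiberwise a pair groupoid and therefore amenable.
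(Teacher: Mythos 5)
The paper does not actually prove this statement: it is Proposition 1.1.3, quoted verbatim from Torpe \cite[Proposition 2.1.6]{TO} (ultimately from Connes \cite{CO82}), and is stated with no argument at all. So there is no in-paper proof to compare against; I can only judge your proposal on its own terms. Your outline is the standard one and most of it is sound: the identification of leaves with fibers, the triviality of holonomy for a fibration, the description $G=(p\times p)^{-1}(\Delta_B)=V\times_B V$ as a submanifold by transversality, the identification of the fiber algebra at $b$ with $\K\bigl(L^2(p^{-1}(b))\bigr)$ via composition of kernels, and the remark that fullness/reducedness is a non-issue because the groupoid is an amenable equivalence relation.

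The genuine gap is exactly where you sense it, but your stated justification for that step is not merely incomplete --- it is the wrong mechanism. You write that the local identifications over trivializing charts ``patch to a single isomorphism'' because the transition data act by conjugation by unitaries and $\K$ is stable. A locally trivial field of elementary C*-algebras with unitary transition cocycle is precisely the general object classified by the Dixmier--Douady invariant in $H^3(B;\mathbb{Z})$, and such fields are in general \emph{not} globally trivial; stability of $\K$ does not kill the obstruction. What saves you here is that your field of compacts does not just have unitary transition data --- it is the field of compact operators \emph{associated to an honest continuous field of Hilbert spaces}, namely $b\mapsto L^2\bigl(p^{-1}(b),\Omega^{1/2}\bigr)$, which forces the Dixmier--Douady class to vanish; one then invokes Dixmier's theorem that every continuous field of infinite-dimensional separable Hilbert spaces over a paracompact base is trivial, so $C^*(V,\F)\cong C_0(B)\otimes\K\bigl(L^2(F)\bigr)\cong C_0(B)\otimes\K$. (This also quietly uses that the fibers have positive dimension so that $L^2(p^{-1}(b))$ is infinite-dimensional and separable; for the degenerate zero-dimensional case the statement as phrased would fail.) Replace the ``stability plus unitary cocycle'' patching argument with the Hilbert-field triviality argument and the proof is complete.
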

\begin{prop}[see {\bf \cite[Proposition 2.1.7]{TO}}] Let $(V, \F)$ be a foliation. If $V^{'}$ $(\subset V)$ is an open subset of foliated manifold V and ${\F}^{'}$ is the restriction of $\F$ to $V^{'}$. Then the graph $G^{'}$ of $(V^{'}, \,{\F}^{'})$ is an open subset in the graph $G$ of $(V, \F)$ and $C^*(V^{'}, {\F}^{'})$ can be canonically embedded into $C^*(V, \F)$. \hfill{$\square$}
\end{prop}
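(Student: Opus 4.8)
The plan is to prove the two assertions in turn, relying on the explicit description of the holonomy groupoid (graph) $G$ as a smooth manifold of dimension $\dim V + \dim \F$ and of $C^*(V,\F)$ as the completion of a convolution $*$-algebra of smooth compactly supported half-densities on $G$, following Connes \cite[Sects. 5--6]{CO82}. Recall that a point of $G$ is a holonomy class $[\gamma]$ of a leafwise path $\gamma$ joining two points $x,y$ of the same leaf of $\F$, and that a fundamental system of neighborhoods of $[\gamma]$ in $G$ is obtained by sliding $\gamma$ to nearby leafwise paths with nearby endpoints inside small foliation charts strung along $\gamma$.

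First I would establish that $G^{'}$ is open in $G$. The natural map sending the $G^{'}$-holonomy class of a leafwise ${\F}^{'}$-path $\gamma$ (that is, a leafwise $\F$-path whose image lies in $V^{'}$) to its $G$-holonomy class is injective, since holonomy is computed locally and is therefore unchanged when passing from $V^{'}$ to $V$; thus $G^{'}$ is identified with the set of classes in $G$ admitting a representative inside $V^{'}$. Given such a class $[\gamma]$, the image of a representative is a compact subset of the open set $V^{'}$, hence at positive distance from $V \setminus V^{'}$; choosing the sliding charts in the neighborhood basis of $[\gamma]$ small enough that their union stays inside $V^{'}$, every sufficiently close class of $G$ also has a representative in $V^{'}$ and so lies in $G^{'}$. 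This proves $G^{'}$ is open, and as an open subgroupoid it inherits the restricted smooth and groupoid structure, so the half-density bundle $\Omega^{1/2}$ of $G^{'}$ is exactly the restriction of that of $G$.

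Next I would construct the canonical embedding. Because $G^{'}$ is open in $G$, extension by zero defines a linear map $\iota \colon C_c^\infty(G^{'},\Omega^{1/2}) \to C_c^\infty(G,\Omega^{1/2})$, since a smooth half-density with compact support in the open set $G^{'}$ is automatically a smooth, compactly supported half-density on $G$. I would then verify that $\iota$ is a $*$-homomorphism for the convolution products: the source, range, multiplication and inversion maps of $G^{'}$ are restrictions of those of $G$; the set of composable pairs lying in $G^{'}$ is precisely $(G^{'} \times G^{'}) \cap G^{(2)}$, so the convolution of two sections supported in $G^{'}$ is again supported in $G^{'}$; and the involution manifestly commutes with extension by zero. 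Hence $\iota$ is an injective $*$-homomorphism of the two dense convolution $*$-algebras.

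The hard part will be to pass from these dense subalgebras to the C*-completions and to see that the induced map remains injective. Here I would compare the regular representations that define the norms: each leaf of $(V^{'},{\F}^{'})$ is an open subset of a leaf of $(V,\F)$, the corresponding $L^2$-space of its holonomy cover embeds isometrically, and $\iota$ intertwines the associated regular representations of $C_c^\infty(G^{'})$ and $C_c^\infty(G)$. This compatibility yields $\lVert \iota(f)\rVert \ge \lVert f\rVert$ in the reduced norms, while extension by zero from an open subgroupoid is in general norm-decreasing; together these force $\iota$ to be isometric, so it extends to an injective $*$-homomorphism $C^*(V^{'},{\F}^{'}) \hookrightarrow C^*(V,\F)$. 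I expect the main technical obstacle to be the matching of the full versus reduced norms on the two algebras, which I would resolve exactly as in Torpe's treatment \cite{TO} by exhibiting representations of $C^*(V,\F)$ that restrict along $\iota$ to the defining representations of $C^*(V^{'},{\F}^{'})$.
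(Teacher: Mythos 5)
The paper does not actually prove this proposition: it is quoted from Torpe \cite[Proposition 2.1.7]{TO} (going back to Connes) and the terminal $\square$ marks it as an imported result, so there is no in-paper argument to compare yours against. Judged on its own merits, the first two-thirds of your sketch is the standard and correct route: identifying $G^{'}$ with the classes in $G$ admitting a representative inside $V^{'}$, proving openness by compactness of a representative path and shrinking the sliding charts, and checking that extension by zero is an injective $*$-homomorphism of the convolution algebras because $G^{'}$ is an open subgroupoid whose structure maps and half-density bundle are restrictions of those of $G$.

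The gap is in your final paragraph. The inequality $\|\iota(f)\|\le\|f\|$ that you invoke (``extension by zero is norm-decreasing'') holds for the \emph{maximal} norms, since every representation of $C_c^\infty(G,\Omega^{1/2})$ restricts to one of $C_c^\infty(G^{'},\Omega^{1/2})$; the inequality $\|\iota(f)\|\ge\|f\|$ that you derive from the regular representations is an inequality of \emph{reduced} norms. Combining the two to conclude that $\iota$ is isometric is only legitimate if you already know the maximal and reduced norms agree, which you have not established. Moreover, even for the reduced norm the argument is incomplete: for $x\notin V^{'}$ lying on a leaf that meets $V^{'}$, and on the orthogonal complement of $L^2(G^{'}_x)$ inside $L^2(G_x)$ for $x\in V^{'}$, the operator $\pi_x(\iota(f))$ need not vanish, so $\|\iota(f)\|$ involves representations that your intertwining argument does not control. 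Showing that all representations of $C_c^\infty(G^{'},\Omega^{1/2})$ arising in the definition of $C^*(V^{'},\F^{'})$ are weakly contained in restrictions of representations of $C_c^\infty(G,\Omega^{1/2})$ --- equivalently, that the extension of $\iota$ to the completions remains injective --- is exactly the substantive content of the Connes--Torpe proof, and your proposal defers it to \cite{TO} rather than supplying it.
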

\begin{prop}[see {\bf \cite[Subsection 2.2]{TO}}] Let $V^{'}(\subset V)$ be a saturated open subset in the foliated manifold V of a foliation $(V, \F)$. Denote by \,${\F}^{'},\, {\F}^{''}$ the restrictions of $\F$ on $V^{'}$ and $V\setminus V^{'}$, respectively. Then $C^*(V^{'}, \,{\F}^{'})$ is an ideal in $C^*(V, \F)$. Moreover, if the foliation $(V, \F)$ is given by a suitable action of an amenable Lie group $H$ such that the subgraph $G\setminus G^{'}$ is given as $(V\setminus V^{'})\times H$ then $C^*(V, \F)$ can be canonically embedded into the following exact sequence
\begin{equation}
\xymatrix{0 \ar[r] & C^*(V^{'}, \,{\F}^{'}) \ar[r] & C^*(V,\F) \ar[r] & C^*(V\setminus V^{'}, \,{\F}^{''}) \ar[r] & 0}. 
\end{equation} \hfill{$\square$}
\end{prop}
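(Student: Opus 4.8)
The plan is to realize all three C*-algebras as completions of convolution algebras on the holonomy groupoid and then to let the saturation hypothesis dictate the ideal structure. Recall that $C^*(V,\F)$ is built as a suitable completion of the convolution $*$-algebra $C_c(G)$ of compactly supported (half-density valued) functions on the graph $G$ of $(V,\F)$, and likewise $C^*(V',\F')$ and $C^*(V\setminus V',\F'')$ come from $C_c(G')$ and $C_c(G'')$, where $G' = G|_{V'}$ is the graph of $(V',\F')$ and $G'' = G\setminus G' = G|_{V\setminus V'}$ is the graph of $(V\setminus V',\F'')$. Since $V'$ is open, Proposition 1.1.4 already supplies the canonical embedding $C^*(V',\F')\hookrightarrow C^*(V,\F)$, obtained by extending functions on the open subgroupoid $G'$ by zero; this gives exactness at the left-hand term.

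First I would establish the ideal assertion. The decisive feature is that $V'$ is not merely open but \emph{saturated}, i.e. a union of whole leaves, so $G'$ is a saturated open subgroupoid: any $\gamma\in G$ sharing its source or range point with an element of $G'$ must itself lie in $G'$, since an endpoint determines the entire leaf. Writing the convolution as $(f*g)(\gamma)=\int f(\gamma_1)\,g(\gamma_1^{-1}\gamma)\,d\gamma_1$, a nonzero contribution forces $\gamma_1\in G'$ whenever $f$ is supported in $G'$, whence $r(\gamma)=r(\gamma_1)\in V'$ and therefore $\gamma\in G'$; the symmetric computation handles $s(\gamma)$. Thus $C_c(G')$ absorbs convolution by $C_c(G)$ on both sides, and passing to completions shows that the image of $C^*(V',\F')$ is a closed two-sided ideal of $C^*(V,\F)$.

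Next I would treat the quotient. Restriction of functions from $G$ to the closed saturated subgroupoid $G''$ defines a $*$-homomorphism $C_c(G)\to C_c(G'')$ that is surjective, with kernel exactly $C_c(G')$ (the functions vanishing on $G''$). The substance of the remaining claim is that, after completion, this descends to an isomorphism $C^*(V,\F)/C^*(V',\F')\cong C^*(V\setminus V',\F'')$, yielding exactness in the middle and at the right. Here the extra hypotheses enter: when $(V,\F)$ is given by an action of an amenable Lie group $H$ and $G\setminus G'$ is presented as the transformation groupoid $(V\setminus V')\times H$, Proposition 1.1.2 identifies $C^*(V\setminus V',\F'')$ with the crossed product $C_0(V\setminus V')\rtimes H$, and the amenability of $H$ ensures that the reduced and full crossed products coincide.

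The main obstacle is exactly this last identification of the quotient: one must show that the quotient norm on $C_c(G)/C_c(G')$ agrees with the norm of $C^*(V\setminus V',\F'')$, so that no collapsing occurs and the sequence is exact at the right-hand term. Amenability of $H$ is what makes the crossed products behave exactly (reduced $=$ full), guaranteeing that the restriction map has closed range equal to all of $C^*(V\setminus V',\F'')$; the three terms then assemble into the displayed short exact sequence.
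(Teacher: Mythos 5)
The paper offers no proof of this proposition: it is quoted from Torpe \cite[Subsection 2.2]{TO}, and the box immediately after the statement closes it without argument, so there is no internal proof to compare yours against. Judged on its own terms, your reconstruction follows the standard line of Torpe's argument and is essentially correct. In particular, the support computation for convolution --- a nonzero contribution forces $r(\gamma)=r(\gamma_1)\in V'$, and saturation then puts the whole leaf, hence $s(\gamma)$, inside $V'$, so $\gamma\in G'$ --- is exactly the right reason why $C_c(G')$ is a two-sided ideal in $C_c(G)$, and you correctly isolate the passage to completions as the real content of exactness on the right.

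Two steps are thinner than they should be. First, the kernel of the restriction map $C_c(G)\to C_c(G'')$ is the set of compactly supported functions vanishing on the closed subgroupoid $G''$, which properly contains $C_c(G')$; you need a density argument to conclude that the closure of $C_c(G')$ in $C^*(V,\F)$ exhausts the kernel of the induced map on completions, rather than asserting the kernel is ``exactly $C_c(G')$''. Second, your exactness-at-the-right argument tacitly treats all three algebras as crossed products, whereas the stated hypothesis only presents the subgraph $G\setminus G'$ as $(V\setminus V')\times H$. The mechanism that actually closes this step is the one the paper itself invokes later (in the proof of Theorem 2.1.1), namely \cite[Lemma 1.1]{CO81}: an equivariant short exact sequence of $C_0$-algebras induces a short exact sequence of full crossed products, and amenability of $H$ identifies full with reduced, so no collapsing occurs in the quotient. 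In the applications in this paper the whole graph is indeed $V\times\mathbb{R}^2$, so your reading is harmless there, but as a proof of the proposition as stated it should be routed through that lemma (or through Torpe's Lemma 2.2.2) rather than through the blanket assumption that $G=V\times H$.
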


In the next subsection, we present the main results from \cite{VU-SH}. That is the classifications of indecomposable MD5-algebras having 4-dimensional commutative derived ideals.

\subsection{The Classification of Indecomposable MD5-algebras having 4-dimensional commutative dereived ideals}

\hskip6mm Firstly, we recall the notion of MD-groups and MD-algebras.
\begin{definition}[see {\bf \cite[Chapter 4, Definition 1.1]{DI75}}]An n-dimensional MD-group (for short, MDn-group) is an n-dimensional real solvable Lie group such that its K-orbits are orbits of dimension zero or maximal dimension. The Lie algebra
of an MDn-group is called an n-dimensional MD-algebra (for short, MDn-algebra).
\end{definition}

Now we use $\mathcal{G}$ to denote a Lie algebra of
dimension 5. We always choose a suitable basis $(X_{1}, X_{2}, X_{3},$
\\ $X_{4}, X_{5})$ in $\mathcal{G}$ so that
$\mathcal{G}$ is isomorphic to ${\mathbb{R}}^{5}$ as a real vector
space. The following proposition gives the classification of all indecomposable MD5-algebras having 4-dimensional commutative ideals.

\begin{prop}[see {\bf \cite[Theorem 2.1]{VU-SH}}]
Let $\mathcal{G}$ be an indecomposable MD5-algebra having 4-dimensional commutative $\mathcal{G}^{1}:$= $[\mathcal{G}, \mathcal{G}]
=\mathbb{R}.X_{2} \oplus \mathbb{R}.X_{3} \oplus
\mathbb{R}.X_{4} \oplus \mathbb{R}.X_{5} \equiv {\mathbb{R}}^{4}$,\,
$ad_{X_{1}} \in End({\mathcal{G}}^{1}) \equiv
Mat_{4}(\mathbb{R})$. Then $\mathcal{G}$ is isomorphic
to one and only one of the following Lie algebras.\\
\begin{itemize}
    \item[1.]${\mathcal{G}}_{5,4,1({\lambda}_{1}, {\lambda}_{2}, {\lambda}_{3})}:$
 $ad_{{X}_1} = \begin{pmatrix} {{\lambda}_1}&0&0&0\\
 0&{{\lambda}_2}&0&0\\0&0&{\lambda}_{3}&0\\0&0&0&1\end{pmatrix};\,
  {\lambda}_1, {\lambda}_2, {\lambda}_3 \in \mathbb{R}\setminus
  \lbrace 0, 1\rbrace,\, {\lambda}_1 \neq {\lambda}_2 \neq {\lambda}_3 \neq
  {\lambda}_1.$

    \item[2.]${\mathcal{G}}_{5,4,2({\lambda}_{1}, {\lambda}_{2})}:$
$ad_{{X}_1} = \begin{pmatrix} {{\lambda}_1}&0&0&0\\
0&{{\lambda}_2}&0&0\\0&0&1&0\\0&0&0&1\end{pmatrix};\,
{\lambda}_{1}, {\lambda}_{2} \in \mathbb{R}\setminus \lbrace 0, 1
\rbrace , {\lambda}_1 \neq {\lambda}_2. $ 

    \item[3.]${\mathcal{G}}_{5,4,3(\lambda)}:$
 $ad_{{X}_1} = \begin{pmatrix}
 {\lambda}&0&0&0\\0&{\lambda}&0&0\\0&0&1&0\\0&0&0&1 \end{pmatrix}; \,
 {\lambda} \in \mathbb{R}\setminus \lbrace 0, 1 \rbrace .$

    \item[4.]${\mathcal{G}}_{5,4,4(\lambda)}:$
$ad_{{X}_1} = \begin{pmatrix} {\lambda}&0&0&0\\0&1&0&0\\
0&0&1&0\\0&0&0&1 \end{pmatrix};\quad {\lambda} \in
\mathbb{R}\setminus \lbrace 0, 1 \rbrace.$

    \item[5.]${\mathcal{G}}_{5,4,5}:$
$ad_{{X}_1} = \begin{pmatrix} 1&0&0&0\\0&1&0&0\\
0&0&1&0\\0&0&0&1 \end{pmatrix}.$

    \item[6.]${\mathcal{G}}_{5,4,6({\lambda}_{1}, {\lambda}_{2})}$ :
$ad_{{X}_1} = \begin{pmatrix} {{\lambda}_1}&0&0&0\\
0&{{\lambda}_2}&0&0\\0&0&1&1\\0&0&0&1\end{pmatrix};\,
{\lambda}_{1}, {\lambda}_{2} \in \mathbb{R}\setminus \lbrace 0, 1
\rbrace , {\lambda}_1 \neq {\lambda}_2.$

    \item[7.]${\mathcal{G}}_{5,4,7(\lambda)}:$
$ad_{{X}_1} = \begin{pmatrix}
{\lambda}&0&0&0\\0&{\lambda}&0&0\\0&0&1&1\\0&0&0&1 \end{pmatrix};
\, {\lambda} \in \mathbb{R}\setminus \lbrace 0, 1 \rbrace .$

    \item[8.]${\mathcal{G}}_{5,4,8(\lambda)}:$
$ad_{{X}_1} = \begin{pmatrix}
{\lambda}&1&0&0\\0&{\lambda}&0&0\\0&0&1&1\\0&0&0&1 \end{pmatrix};
\, {\lambda} \in \mathbb{R}\setminus \lbrace 0, 1 \rbrace .$

    \item[9.]${\mathcal{G}}_{5,4,9(\lambda)}:$
$ad_{{X}_1} = \begin{pmatrix}
{\lambda}&0&0&0\\0&1&1&0\\0&0&1&1\\0&0&0&1 \end{pmatrix}; \,
{\lambda} \in \mathbb{R}\setminus \lbrace 0, 1\rbrace .$

    \item[10.]${\mathcal{G}}_{5,4,10}:$
$ad_{{X}_1} = \begin{pmatrix} 1&1&0&0\\0&1&1&0\\
0&0&1&1\\0&0&0&1 \end{pmatrix}.$

    \item[11.]${\mathcal{G}}_{5,4,11({\lambda}_{1}, {\lambda}_{2},\varphi)}:$
$ad_{{X}_1} = \begin{pmatrix} \cos\varphi&-\sin\varphi&0&0\\
\sin\varphi&\cos\varphi&0&0\\0&0&{\lambda}_{1}&0\\0&0&0&{\lambda}_{2}\end{pmatrix};$
\,${\lambda}_{1}, {\lambda}_{2} \in \mathbb{R}\setminus \lbrace 0 \rbrace ,
{\lambda}_1 \neq {\lambda}_2,\varphi \in (0,\pi).$

    \item[12.]${\mathcal{G}}_{5,4,12(\lambda, \varphi)}:$
$ad_{{X}_1} = \begin{pmatrix} \cos\varphi&-\sin\varphi&0&0\\
\sin\varphi&\cos\varphi&0&0\\0&0&\lambda&0\\0&0&0&\lambda\end{pmatrix};\,
\lambda \in \mathbb{R}\setminus \lbrace 0 \rbrace,\varphi \in (0,\pi).$

    \item[13.]${\mathcal{G}}_{5,4,13(\lambda, \varphi)}:$
$ad_{{X}_1} = \begin{pmatrix} \cos\varphi&-\sin\varphi&0&0\\
\sin\varphi&\cos\varphi&0&0\\0&0&\lambda&1\\0&0&0&\lambda\end{pmatrix};\,
\lambda \in \mathbb{R}\setminus \lbrace 0 \rbrace,\varphi \in (0,\pi).$

    \item[14.]${\mathcal{G}}_{5,4,14(\lambda, \mu, \varphi)}:$
$ad_{{X}_1} = \begin{pmatrix} \cos\varphi&-\sin\varphi&0&0\\
\sin\varphi&\cos\varphi&0&0\\0&0&\lambda&-\mu\\0&0&\mu&\lambda\end{pmatrix};$
\,$\lambda, \mu \in \mathbb{R}, \mu > 0, \varphi \in (0,\pi).$ \hfill{$\square$}
\end{itemize}
\end{prop}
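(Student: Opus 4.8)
The plan is to reduce the classification to a purely linear-algebraic problem—classifying the operator $A := ad_{X_1}$ up to conjugacy and nonzero rescaling—and to solve that problem with the real Jordan canonical form. First I would extract the structural consequences of the hypotheses. Because $\mathcal{G}^1$ is commutative, the only brackets that can be nonzero are $[X_1,Y]=AY$ for $Y\in\mathcal{G}^1\cong\mathbb{R}^4$, so $\mathcal{G}$ is the semidirect product $\mathbb{R}X_1\ltimes_A\mathbb{R}^4$ and is completely encoded by $A\in Mat_4(\mathbb{R})$. Since $[\mathcal{G},\mathcal{G}]=\operatorname{im}A$ must equal the whole $4$-dimensional $\mathcal{G}^1$, the matrix $A$ is forced to be invertible; this is the source of the conditions $\lambda_i\neq 0$ throughout the list. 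I would also verify here the two qualitative hypotheses: $\mathcal{G}$ is indecomposable (a complementary ideal would have to sit inside $\mathcal{G}^1$, hence inside $\ker A=\{0\}$), and $\mathcal{G}$ is genuinely an MD-algebra, which I check by computing the Kirillov form $B_f(\cdot,\cdot)=f([\cdot,\cdot])$ at $f=(x_1,\xi)\in\mathcal{G}^*$: it is supported on the $X_1$-row, where it equals $A^{T}\xi$, so $\operatorname{rank}B_f$ is $0$ when $\xi=0$ and $2$ otherwise, i.e. every K-orbit has dimension $0$ or the maximal value $2$.

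Next I would identify the isomorphism relation. Any isomorphism $\varphi:\mathcal{G}_A\to\mathcal{G}_{A'}$ preserves the characteristic ideal $\mathcal{G}^1$, so it is given by $T:=\varphi|_{\mathcal{G}^1}\in GL_4(\mathbb{R})$ together with $\varphi(X_1)=cX_1'+v$, where $c\in\mathbb{R}\setminus\{0\}$ and $v\in\mathbb{R}^4$. Imposing $\varphi([X_1,Y])=[\varphi X_1,\varphi Y]$ and using that $v$ commutes with $\mathbb{R}^4$, the term $v$ disappears and one obtains $TA=cA'T$, that is $A'=c^{-1}TAT^{-1}$; conversely every such pair $(c,T)$ yields an isomorphism. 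Hence $\mathcal{G}_A\cong\mathcal{G}_{A'}$ if and only if $A'$ is conjugate to a nonzero scalar multiple of $A$, and the theorem becomes the assertion that the fourteen displayed matrices are normal forms for $GL_4(\mathbb{R})$ under the equivalence $A\sim cTAT^{-1}$.

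I would then run the real Jordan canonical form, splitting into the case of four real eigenvalues and the cases of one or two genuinely complex conjugate pairs, and in each case use the single scalar $c$ to normalize. In the real cases I rescale so that a chosen eigenvalue (together with the off-diagonal entries of its block) becomes $1$; organizing by the partition of $4$ into Jordan blocks and by the coincidence pattern of the eigenvalues reproduces the diagonal and Jordan forms of families 1--10. In the complex cases I rescale the modulus of one conjugate pair to $1$, turning its block into $\begin{pmatrix}\cos\varphi&-\sin\varphi\\\sin\varphi&\cos\varphi\end{pmatrix}$ with $\varphi\in(0,\pi)$, and I leave the remaining real eigenvalues (families 11--13) or the second complex pair $\begin{pmatrix}\lambda&-\mu\\\mu&\lambda\end{pmatrix}$ (family 14) as free parameters. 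Matching block sizes and eigenvalue coincidences to the fourteen displayed matrices is then routine bookkeeping.

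The step I would treat most carefully—and where the main obstacle lies—is the uniqueness clause (``one and only one''). Here I must check that two normal forms are related by some $(c,T)$ exactly when they coincide, and that each isomorphism class is hit once. The strategy is to read off a complete set of invariants of the equivalence $A\sim cTAT^{-1}$—the Jordan block partition, the eigenvalue coincidence pattern, and the eigenvalue data taken up to a common real scaling (equivalently, ratios of the real eigenvalues and, in the complex cases, the angle $\varphi$ together with the scale-invariant ratios of the remaining eigenvalues)—and to verify that these invariants separate the families. The residual ambiguities to be resolved are the choice of which eigenvalue is scaled to $1$ and the freedom to reorder blocks of equal type; eliminating them is exactly what forces the inequality and range constraints in the statement ($\lambda_1\neq\lambda_2\neq\lambda_3$, the $\lambda_i\notin\{0,1\}$, $\varphi\in(0,\pi)$, $\mu>0$), and confirming that these constraints make every class occur precisely once is the crux of the argument.
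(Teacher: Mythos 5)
The paper itself contains no proof of this proposition: it is quoted from \cite[Theorem 2.1]{VU-SH} and stated with the proof omitted, so your argument can only be judged on its own merits. Your structural reduction is correct and is surely the skeleton of any proof: since $\mathcal{G}^{1}$ is abelian, $\mathcal{G}=\mathbb{R}X_{1}\ltimes_{A}\mathbb{R}^{4}$ with $A=ad_{X_{1}}$; the requirement $[\mathcal{G},\mathcal{G}]=\operatorname{im}A=\mathcal{G}^{1}$ forces $A\in GL_{4}(\mathbb{R})$; the Kirillov form has rank $0$ or $2$ at every functional, so the MD condition is automatic; indecomposability is automatic (for any decomposition $\mathfrak{a}\oplus\mathfrak{b}$ into nonzero solvable ideals one gets $4=\dim\mathfrak{a}^{1}+\dim\mathfrak{b}^{1}\le\dim\mathcal{G}-2=3$); and $\mathcal{G}_{A}\cong\mathcal{G}_{A'}$ if and only if $A'=c^{-1}TAT^{-1}$ for some $c\ne 0$, $T\in GL_{4}(\mathbb{R})$. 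All of this is sound, and it correctly converts the theorem into the classification of $GL_{4}(\mathbb{R})$ modulo conjugation and nonzero rescaling.

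The gap is in the step you defer as ``routine bookkeeping,'' and it is precisely where the content lies. First, the enumeration of real Jordan types modulo scaling does not reproduce the fourteen families as constrained. For instance $A=\operatorname{diag}(\lambda,1)\oplus\bigl(\begin{smallmatrix}1&1\\0&1\end{smallmatrix}\bigr)$ with $\lambda\ne 0,1$ (eigenvalue $1$ with Jordan blocks of sizes $2$ and $1$), and the non-semisimple double complex pair $\bigl(\begin{smallmatrix}R&I_{2}\\0&R\end{smallmatrix}\bigr)$ with $R$ the $2\times 2$ rotation block of families 11--14, are invertible and hence, by your own criteria, define indecomposable MD5-algebras with $4$-dimensional commutative derived ideal; yet their Jordan data modulo a scalar --- which your isomorphism criterion shows is an isomorphism invariant --- match none of the listed matrices (family 6 excludes $\lambda_{2}=1$, family 14 is semisimple over $\mathbb{C}$). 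Your plan must either show such cases cannot occur (they can) or confront the fact that the matching is not mere bookkeeping. Second, the uniqueness clause does not follow from the stated constraints, because the normalization ``scale one eigenvalue to $1$'' is not canonical: $\operatorname{diag}(2,3,4,1)$ and $\tfrac{1}{2}\operatorname{diag}(2,3,4,1)\sim\operatorname{diag}(3/2,2,1/2,1)$ are distinct admissible parameter tuples of family 1 giving isomorphic algebras, and similarly $(\lambda_{1},\lambda_{2},\varphi)\sim(-\lambda_{1},-\lambda_{2},\pi-\varphi)$ in family 11. So ``one and only one'' can at best hold at the level of the fourteen families, and your claim that the listed inequalities eliminate all residual ambiguities is not correct as stated. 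To close the argument you would need to carry out the full orbit analysis of $(c,T)\cdot A=cTAT^{-1}$ and specify exactly which cross-section of parameters is intended.
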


We recall that each real Lie algebra $\mathcal{G}$ defines only one connected and simply connected Lie group $G$ such that $Lie(G)= \mathcal{G}$. Therefore, we obtain a collection of 14 families of connected and simply connected MD5-groups corresponding to the indecomposable MD5-algebras given in Proposition 1.2.2. For convenience, each MD5-group from this collection is also denoted by the same indices as corresponding MD5-algebra. For example, $G_{5,4,11({\lambda}_{1}, {\lambda}_{2},\varphi)}$ is the indecomposable connected and simply connected MD5-group corresponding to ${\mathcal{G}}_{5,4,11({\lambda}_{1}, {\lambda}_{2},\varphi)}$.

Before we study K-theory for the leaf spaces of MD5-foliations associated to  indecomposable MD5-groups corresponding to the MD5-algebras which are listed above, we need to recall the topological classifications of the class of these MD5-foliations in \cite{VU-HO09}.

\subsection{The Topological Classification of MD5-Foliations Associated to Considered MD5-groups}

\hskip6mm Let $G$ be one of all MD5-groups corresponding to the MD5-algebras listed in Proposition 1.2.2. Denote by ${\mathcal{G}}^{*}$ the dual space of the Lie algebra $\mathcal{G} = Lie(G)$ of $G$. Clearly,  ${\mathcal{G}}^{*}$ can be identified with ${\mathbb{R}}^{5}$ by fixing in it the basis $(\nobreak X_{1}^{*}, X_{2}^{*}, X_{3}^{*}, X_{4}^{*}, X_{5}^{*}\nobreak )$ which is the dual of the basis $(\nobreak X_{1}, X_{2}, X_{3}, X_{4}, X_{5} \nobreak)$ of $\mathcal{G}$. Let
$F = {\alpha}X_{1}^{*} + {\beta}X_{2}^{*} + {\gamma}X_{3}^{*} + {\delta}X_{4}^{*} + {\sigma}X_{5}^{*} \equiv ({\alpha}, {\beta}, {\gamma}, {\delta}, {\sigma})$ be an arbitrary element of ${\mathcal{G}}^{*} \equiv {\mathbb{R}}^{5}$. The notation ${\Omega}_{F}$ will be used to denote the K-orbit of $G$ which contains $F$. The geometrical picture of the K-orbits of $G$ is given by the following proposition which has been proved in \cite{VU-HO07}. 

\begin{prop}[see {\bf \cite[Theorems 3.3.1 -- 3.3.4]{VU-HO07}}] For each considered MD5-group $G$, the
K-orbit ${\Omega}_{F}$ of $G$ is described as follows.
\begin{description}
        \item [1.] Let $G$ be one of \,\, $G_{5,4,1({\lambda}_{1}, {\lambda}_{2},
{\lambda}_{3})}$, \,\,$G_{5,4,2({\lambda}_{1}, {\lambda}_{2})}$, \,\,$G_{5,4,3(\lambda)}$, \,\,$G_{5,4,4(\lambda)}$, \,\,$G_{5,4,5)}$, \,\,$G_{5,4,6({\lambda}_{1}, {\lambda}_{2})}$, \,\,$G_{5,4,7(\lambda)}$, \,\, $G_{5,4,8(\lambda)}$, \,\,$G_{5,4,9(\lambda)}$, \,\,$G_{5,4,10}$;
${\lambda}_{1}$, ${\lambda}_{2}$, ${\lambda}_{3},\lambda\in
\mathbb{R}\backslash\{0,1\}, \, {\lambda}_1 \neq {\lambda}_2 \neq {\lambda}_3 \neq
  {\lambda}_1$.
        \begin{description}
            \item[1.1.] If $\beta=\gamma=\delta=\sigma=0$ then $\Omega_F=\{F\}$ (the
0-dimensional orbit).
            \item[1.2.] If $\beta^2+\gamma^2+\delta^2+\sigma^2\neq0$ then $\Omega_F$
is the orbit of dimension 2 and it is one of the following:
        \end{description}
\begin{itemize}
    \item $\{(x,\beta{e^{a\lambda_1}},\gamma{e^{a\lambda_2}},\delta{e^{a\lambda_3}},
    \sigma{e^a}),x,a\in\mathbb{R}\}$ when $G=G_{5,4,1({\lambda}_{1},
    {\lambda}_{2}, {\lambda}_{3})}$.
    \item $\{(x,\beta{e^{a\lambda_1}},\gamma{e^{a\lambda_2}},\delta{e^{a}},
    \sigma{e^a}),x,a\in\mathbb{R}\}$ when $G=G_{5,4,2({\lambda}_{1},
    {\lambda}_{2})}$.
    \item $\{(x,\beta{e^{a\lambda}},\gamma{e^{a\lambda}},\delta{e^a},
    \sigma{e^a}),x,a\in\mathbb{R}\}$ when $G=G_{5,4,3(\lambda)}$.
    \item $\{(x,\beta{e^{a\lambda}},\gamma{e^a},\delta{e^{a}},
    \sigma{e^a}),x,a\in\mathbb{R}\}$ when $G=G_{5,4,4(\lambda)}$.
    \item $\{(x,\beta{e^a},\gamma{e^a},\delta{e^a},
    \sigma{e^a}),x,a\in\mathbb{R}\}$ when $G=G_{5,4,5}$.
    \item $\{(x,\beta{e^{a\lambda_1}},\gamma{e^{a\lambda_2}},\delta{e^{a}},\delta{a}{e^{a}}+
    \sigma{e^a}),x,a\in\mathbb{R}\}$ when $G=G_{5,4,6({\lambda}_{1},{\lambda}_{2})}$.
    \item $\{(x,\beta{e^{a\lambda}},\gamma{e^{a\lambda}},\delta{e^a},\delta{a}{e^{a}}+
    \sigma{e^a}),x,a\in\mathbb{R}\}$ when $G=G_{5,4,7(\lambda)}$.
    \item $\{(x,\beta{e^{a\lambda}},\beta{a}{e^{a\lambda}}+\gamma{e^{a\lambda}},
    \delta{e^a},\delta{a}{e^{a}}+\sigma{e^a}),x,a\in\mathbb{R}\}$ 
    when $G=G_{5,4,8(\lambda)}$.
    \item $\{(x,\beta{e^{a\lambda}},\gamma{e^a},\gamma{a}{e^a}+\delta{e^{a}},
    \frac{\gamma{a^2}e^a}{2}+\delta{a}{e^{a}}+\sigma{e^a}),x,a\in\mathbb{R}\}$
    when $G=G_{5,4,9(\lambda)}$.
    \item $\{(x,\beta{e^a},\beta{a}{e^a}+\gamma{e^a},\frac{\beta{a^2}e^a}{2}+
    \gamma{a}{e^a}+\delta{e^{a}},\frac{\beta{a^3e^a}}{6}+ \frac{\gamma{a^2}e^a}{2}+
    \delta{a}{e^{a}}+\sigma{e^a})$,

    \hfill {$x,a\in\mathbb{R}\}$} when $G=G_{5,4,10}$.
\end{itemize}
    \item [2.] Let G be one of \,\, $G_{5,4,11({\lambda}_{1}, {\lambda}_{2},\varphi)}$, \,\,$G_{5,4,12(\lambda,\varphi)}$, \,\,$G_{5,4,13(\lambda,\varphi)}$; $\lambda_1,\lambda_2,\lambda\in\mathbb{R}\setminus\{0\}$, ${\lambda}_1 \neq {\lambda}_2$, $\varphi\in(0,\pi)$. 
Let us identify \,\,${\mathcal{G}}_{5,4,11(\lambda_{1}, \lambda_{2},\varphi)}^*$, 
\,\,${\mathcal{G}}_{5,4,12(\lambda,\varphi)}^*$, \,\,${\mathcal{G}}_{5,4,13(\lambda,\varphi)}^*$ with $\mathbb{R}\times \mathbb{C}\times {\mathbb{R}}^2$ and $F$ with
$(\alpha,\beta+i\gamma,\delta,\sigma)$. Then we have
    \begin{description}
    \item[2.1.] If
$\beta+i\gamma=\delta=\sigma=0$ then $\Omega_F=\{F\}$ (the
0-dimensional orbit).
    \item[2.2.] If $|\beta+i\gamma|^2+\delta^2+\sigma^2\neq0$ then $\Omega_F$
is the orbit of dimension 2 and it is one of the following:
    \end{description}
\begin{itemize}
    \item $\{(x,(\beta+i\gamma)e^{ae^{-i\varphi}},\delta{e^{a\lambda_1}},
    \sigma{e^{a\lambda_2}}),x,a\in\mathbb{R}\}$
    when $G=G_{5,4,11({\lambda}_{1}, {\lambda}_{2},
\varphi)}$.
    \item $\{(x,(\beta+i\gamma)e^{ae^{-i\varphi}},\delta{e^{a\lambda}},
    \sigma{e^{a\lambda}}),x,a\in\mathbb{R}\}$
    when $G=G_{5,4,12(\lambda,\varphi)}$.
    \item $\{(x,(\beta+i\gamma)e^{ae^{-i\varphi}},\delta{e^{a\lambda}},
    \delta{a}{e^{a\lambda}}+\sigma{e^{a\lambda}}),x,a\in\mathbb{R}\}$ when
    $G=G_{5,4,13(\lambda,\varphi)}$.
\end{itemize}
    \item [3.] Let $G$ be
$G_{5,4,14(\lambda, \mu, \varphi)}$. Let us identify \,\,
${\mathcal{G}}_{5,4,14(\lambda,\mu,\varphi)}^*$ with
$\mathbb{R}\times \mathbb{C} \times \mathbb{C}$ and $F$ with
$(\alpha,\beta+i\gamma,\delta+i\sigma)$;
$\lambda,\mu\in\mathbb{R}$, \,\,$\mu>0$, \,\,$\varphi\in(o,\pi)$. Then we have
\begin{description}
    \item[3.1.] If $\beta+i\gamma=\delta+i\sigma=0$ then $\Omega_F=\{F\}$ (the
0-dimensional orbit).
    \item[3.2.] If $|\beta+i\gamma|^2+|\delta+i\sigma|^2\neq0$ then

$$\Omega_F=\{(x,(\beta+i\gamma)e^{ae^{-i\varphi}},(\delta+i\sigma)e^{a(\lambda-i\mu)}),x,a\in\mathbb{R}\}$$
(the 2-dimensional orbit). \hfill{$\square$}
\end{description}
\end{description} 
\end{prop}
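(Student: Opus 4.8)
The plan is to apply Kirillov's orbit method directly, exploiting the fact that each of the considered groups $G$ is a connected, simply connected solvable Lie group of the form $\mathbb{R} \ltimes \mathbb{R}^4$, in which the normal abelian factor is the derived ideal $\G^1 = [\G,\G]$ and the action of the $\mathbb{R}$-factor is generated by $\mathrm{ad}_{X_1}$. By definition the K-orbit $\Omega_F$ is the orbit of $F$ under the coadjoint action $\langle K(g)F, X\rangle = \langle F, \mathrm{Ad}(g^{-1})X\rangle$, so the whole computation reduces to determining the adjoint action of $G$ on $\G$ and then dualizing. Since every group element can be written as $\exp(aX_1)\exp(v)$ with $v \in \G^1$, it suffices to treat these two types of factors separately.

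First I would compute the adjoint action on the two factors. For $g = \exp(aX_1)$ one has $\mathrm{Ad}(g) = \exp(a\,\mathrm{ad}_{X_1})$, which fixes $X_1$ (because $\mathrm{ad}_{X_1}X_1 = 0$) and acts on $\G^1$ by $\exp(aA)$, where $A := \mathrm{ad}_{X_1}\vert_{\G^1}$ is the matrix listed in Proposition 1.2.2. For $g = \exp(v)$ with $v \in \G^1$, commutativity of $\G^1$ forces $\mathrm{ad}_v^2 = 0$ on all of $\G$ (indeed $\mathrm{ad}_v(X_1) = -Av \in \G^1$ and $\mathrm{ad}_v$ kills $\G^1$), hence $\mathrm{Ad}(\exp v) = I + \mathrm{ad}_v$; this element leaves $\G^1$ pointwise fixed and alters only the $X_1$-component.

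Dualizing, I would read off the coadjoint action on $F = (\alpha,\beta,\gamma,\delta,\sigma)$. The element $\exp(v)$ fixes $(\beta,\gamma,\delta,\sigma)$ and sends $\alpha \mapsto \alpha + \langle (\beta,\gamma,\delta,\sigma),\, Av\rangle$. Since $A$ is invertible in every one of the fourteen families — the diagonal and Jordan eigenvalues lie in $\mathbb{R}\setminus\{0\}$, and the rotational blocks have eigenvalues $e^{\pm i\varphi}$ and $\lambda \pm i\mu$ with $\mu > 0$ — the transpose $A^{T}$ is injective, so whenever $(\beta,\gamma,\delta,\sigma)\neq 0$ the map $v \mapsto \langle (\beta,\gamma,\delta,\sigma),\, Av\rangle$ is surjective onto $\mathbb{R}$. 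This is exactly what produces the free first coordinate $x \in \mathbb{R}$ in every orbit formula. The element $\exp(aX_1)$ fixes $\alpha$ and transports $(\beta,\gamma,\delta,\sigma)$ along the one-parameter curve governed by $\exp(aA)$. Combining the two actions, the orbit through $F$ consists of all points whose first coordinate is an arbitrary $x \in \mathbb{R}$ and whose last four coordinates are the image of $(\beta,\gamma,\delta,\sigma)$ under $\exp(aA)$ ($a \in \mathbb{R}$); since $a$ ranges over all of $\mathbb{R}$, any sign ambiguity introduced by the inverse-transpose in the dualization is harmless and leaves the orbit, as a set, unchanged.

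The case split then falls out at once: when $(\beta,\gamma,\delta,\sigma)=0$ the functional $F$ is fixed by all of $G$ and $\Omega_F = \{F\}$ is the zero-dimensional orbit (cases 1.1, 2.1, 3.1), while when $(\beta,\gamma,\delta,\sigma)\neq 0$ the two independent parameters $x$ and $a$ yield a two-dimensional orbit (cases 1.2, 2.2, 3.2). I expect the main labor — and the only genuine obstacle — to be the explicit exponentiation $\exp(aA)$ for the non-semisimple and rotational families. For the Jordan-type blocks (families 6--10) one expands $\exp(a(\lambda I + N)) = e^{a\lambda}\sum_{k}\tfrac{a^{k}}{k!}N^{k}$ for the nilpotent part $N$, which generates precisely the polynomial-in-$a$ prefactors such as $\delta a e^{a}$ and $\tfrac{1}{2}\gamma a^{2} e^{a}$; for the rotational families (11--14) one rewrites the upper-left block in complex form to obtain the factor $e^{a e^{-i\varphi}}$. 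Keeping these coefficients and signs exactly consistent with the stated formulas is the delicate bookkeeping, but it is a finite and mechanical verification once the structural picture above is fixed.
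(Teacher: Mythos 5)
The paper contains no proof of this proposition at all: it is imported verbatim from Theorems 3.3.1--3.3.4 of the cited work of Vu and Hoa (2007) and stated with only a citation and a closing box, so there is no internal argument to compare yours against. That said, your blind derivation is the standard one and is structurally sound: writing every group element as $\exp(aX_1)\exp(v)$ with $v\in\mathcal{G}^1$, observing that $\mathrm{ad}_v^2=0$ hence $\mathrm{Ad}(\exp v)=I+\mathrm{ad}_v$ because $\mathcal{G}^1$ is commutative, and dualizing does yield a free translation of the $\alpha$-coordinate (precisely because every listed matrix $A=\mathrm{ad}_{X_1}\vert_{\mathcal{G}^1}$ is invertible, so $v\mapsto\langle A^{T}f,v\rangle$ is a nonzero functional whenever $f=(\beta,\gamma,\delta,\sigma)\neq 0$) together with a one-parameter flow on the remaining four coordinates; the dichotomy between the $0$-dimensional and $2$-dimensional cases then falls out exactly as you describe.

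One point deserves more care than you give it. You dismiss ``any sign ambiguity introduced by the inverse-transpose'' as harmless, but only the sign is harmless (reparametrize $a\mapsto -a$); the transpose is not. The coadjoint flow on $(\beta,\gamma,\delta,\sigma)$ is by $\exp(-aA^{T})$, equivalently by $\exp(aA^{T})$ after reparametrization, and for the non-normal families this genuinely changes the explicit formulas: for the Jordan block of $\mathcal{G}_{5,4,6(\lambda_1,\lambda_2)}$ the matrix $\exp(aA)$ would place $\bigl((\delta+a\sigma)e^{a},\,\sigma e^{a}\bigr)$ in the last two slots instead of the listed $\bigl(\delta e^{a},\,\delta a e^{a}+\sigma e^{a}\bigr)$, and for the rotation blocks it is the transpose that produces the factor $e^{ae^{-i\varphi}}$ rather than $e^{ae^{+i\varphi}}$. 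So the ``mechanical verification'' you defer must be carried out with $A^{T}$ throughout; done that way, all fourteen families do reproduce the stated orbit formulas, and your argument is complete.
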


In the introduction we have emphasized that, for every connected and
simply connected MD-group, the family of maximal-dimensional
K-orbits forms a measured foliation in terms of A. Connes.
Namely, we have the following proposition.

\begin{prop}[see {\bf \cite[Theorem 3.1]{VU-HO09}}] 
Let $G$ be one of the connected and simply connected MD5-groups corresponding to the MD5-algebras listed in Proposition 1.2.2, $\mathcal{F}_{G}$ be the family of all its K-orbits of dimension two and $V_{G}: = \bigcup \{ \Omega \,|\, \Omega \in \mathcal{F}_{G}\}$. Then $(V_{G},\,\mathcal{F}_{G})$ is a measurable foliation in the sense of Connes. We call it MD5-foliation associated with MD5-group
$G$. \hfill {$\square$}
\end{prop}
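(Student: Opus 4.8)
The goal is to show that for each MD5-group $G$ in the list, the family $\mathcal{F}_G$ of all two-dimensional K-orbits, whose union is $V_G$, forms a measured foliation in the sense of Connes. The plan is to verify three things in turn: that $V_G$ is an open submanifold of $\mathcal{G}^* \equiv \mathbb{R}^5$, that $\mathcal{F}_G$ is a genuine foliation of $V_G$ by two-dimensional leaves, and finally that it carries a transverse invariant measure so as to be \emph{measured} in Connes' sense.

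First I would identify $V_G$ explicitly. By Proposition 1.2.5 the only zero-dimensional orbits are the fixed points $\{F\}$ with $\beta=\gamma=\delta=\sigma=0$ (respectively $\beta+i\gamma=\delta=\sigma=0$ or $\beta+i\gamma=\delta+i\sigma=0$ in the complex parametrizations of cases~2 and~3), i.e.\ exactly the points of the one-dimensional axis $\mathbb{R}.X_1^*$. Every other orbit is two-dimensional. Hence $V_G = \mathbb{R}^5 \setminus \mathbb{R}.X_1^*$, which is an open connected submanifold of $\mathbb{R}^5$ of dimension $5$, and $\mathcal{F}_G$ is precisely the collection of all leaves filling $V_G$. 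The explicit orbit formulas already exhibit each leaf as the image of a smooth injective immersion of $\mathbb{R}^2$ (coordinates $x,a$), so the leaves are two-dimensional immersed submanifolds.

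Next I would check the foliation structure. Since each $G$ acts on $\mathcal{G}^*$ by the co-adjoint action and the two-dimensional K-orbits are exactly the orbits of the (amenable, solvable, simply connected) group $G$ restricted to $V_G$, the tangent distribution $T\mathcal{F}_G$ is spanned pointwise by the fundamental vector fields of this action; one verifies from the formulas that these two vector fields are everywhere linearly independent on $V_G$, so the distribution is smooth of constant rank~$2$ and is involutive (being tangent to group orbits, hence automatically integrable by Frobenius, with the orbits as integral manifolds). This gives $(V_G,\mathcal{F}_G)$ the structure of a smooth foliation. I expect this step to be essentially a direct computation from the orbit parametrizations in Proposition 1.2.5, differentiating with respect to $x$ and $a$ and checking the $2\times 2$ rank condition, so it is routine rather than an obstacle.

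The substantive point, and the part I expect to be the main obstacle, is producing the \emph{transverse invariant measure} required by Connes' definition of a measured foliation. Here the strategy is to exploit the homogeneity of the situation: because the foliation arises from orbits of a Lie group action, it suffices to choose a transversal to the two-dimensional leaves and equip it with a measure that is invariant under the holonomy pseudogroup. Concretely I would slice $V_G$ by a transversal parametrized by the invariants that distinguish distinct orbits (for the diagonal cases~1 these are quantities built from $\beta,\gamma,\delta,\sigma$ that are constant along each orbit, and in the complex cases~2 and~3 the corresponding moduli and ratios), and then verify that a suitable smooth density in these transverse coordinates is preserved by the flow generating the $a$-direction of the orbits. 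The delicate feature is that the exponential factors $e^{a\lambda_i}$, $e^{a e^{-i\varphi}}$, etc.\ stretch the leaves, so the transverse measure must be weighted to compensate for this expansion and make the holonomy measure-preserving; checking this invariance uniformly across all fourteen families is where the real work lies. Once such an invariant transverse measure is exhibited, the triple consisting of $V_G$, the foliation $\mathcal{F}_G$, and this measure satisfies the defining conditions, and $(V_G,\mathcal{F}_G)$ is a measured foliation in the sense of Connes, completing the proof.
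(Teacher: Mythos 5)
First, a caveat about the ground truth: the paper does not prove this proposition at all --- it is quoted from \cite[Theorem 3.1]{VU-HO09} and closed with a tombstone --- so your proposal can only be measured against the standard argument of that reference, which is exactly the three-step route you describe. Your first two steps are fine: $V_G=\mathbb{R}^5\setminus\mathbb{R}.X_1^*\cong\mathbb{R}\times(\mathbb{R}^4\setminus\{0\})$ is open, and the two-dimensional orbits are the integral manifolds of the rank-two involutive distribution spanned by the fundamental vector fields of the coadjoint action (equivalently, for the types $\mathcal{F}_2,\mathcal{F}_3$, by the two generators of the actions $\lambda_{12},\lambda_{14}$ of Section 2, and for type $\mathcal{F}_1$ by the fibration over $S^3$). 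One factual slip: the parametrizations $(x,a)\mapsto\Omega_F$ are immersions but not always injective. For $\mathcal{F}_{4,12(1,\frac{\pi}{2})}$ the leaves with $t=s=0$ are cylinders $\mathbb{R}\times S^1$, and for $\mathcal{F}_{4,14(0,1,\frac{\pi}{2})}$ every leaf is such a cylinder; so not every leaf is a plane. This does not hurt the foliation structure, but it matters below.

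The genuine gap is in the step you yourself identify as the substantive one. A holonomy-invariant transverse measure of full support, obtained by ``weighting a smooth density to compensate for the exponential stretching,'' does not exist for the types $\mathcal{F}_2,\mathcal{F}_3$, so no choice of weight can make your construction go through. Concretely, take the cylindrical leaf $L=\{(r,e^{-ia},0,0)\}$ of $\mathcal{F}_{4,12(1,\frac{\pi}{2})}$ and the transversal $T=\{(0,1+u,0,t,s)\}$; the holonomy of the core loop $a\mapsto a+2\pi$ of $L$ acts on $T$ by $(u,t,s)\mapsto(u,e^{2\pi}t,e^{2\pi}s)$, and the only locally finite measures on $T$ invariant under this expansion are supported on $\{t=s=0\}$. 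Hence every invariant transverse measure annihilates the spiralling leaves accumulating on the cylinders; the obstruction is the nontrivial expansive holonomy, not a failure to normalize correctly. The proposition survives because Connes' notion does not require the transverse measure to have full support (and in the weaker sense of a measurable foliation no invariant measure is needed at all): a nonzero invariant transverse measure is produced at once from any saturated piece on which the restricted foliation is a fibration --- e.g.\ $W_2\cong\mathbb{R}\times S^1\times\mathbb{R}_+$ fibring over $\mathbb{R}_+$, or all of $V$ for type $\mathcal{F}_1$, which fibres over $S^3$. You should replace the search for a globally weighted transverse density by this observation (equivalently, by the correspondence between invariant transverse measures and $\mathbb{R}^2$-invariant measures on $V$ for the unimodular group $\mathbb{R}^2$, which again yields measures concentrated on proper saturated subsets).
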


\subsection*{Remarks and Notations}

\hskip6mm Note that $V_{G}$ is an open submanifold of the dual space
$\mathcal{G}^{*} \equiv \mathbb{R}^{5}$ of the Lie algebra
$\mathcal{G}$ corresponding to $G$. Furthermore, for all MD5-groups of
the forms $G_{5,4, ...}$, the manifolds $V_{G}$ are diffeomorphic to
each other. So, for simplicity of notation, we shall write $(V,\,
F_{4, ...})$ instead of $(V_{G_{4, ...}},\, F_{G_{4, ...}})$. Now we recall the
classification of considered MD5-foliations (see \cite[Theorem 3.2]{VU-HO09}) in the following proposition below.

\begin{prop}[]{\bf (The Classification of Considered MD5-foliations)}
\begin{enumerate}
    \item [1.] There exist exactly 3 topological types of 14
    families of considered MD5-foliations as follows:
    \begin{enumerate}
        \item[1.1.]
        $\Bigl \{(V,{\mathcal{F}}_{4,1(\lambda_1,\lambda_2,\lambda_3)}),
        (V,{\mathcal{F}}_{4,2(\lambda_1,\lambda_2)}),
        (V,{\mathcal{F}}_{4,3(\lambda)}), (V,{\mathcal{F}}_{4,4(\lambda)}), 
        (V,{\mathcal{F}}_{4,5}), (V,{\mathcal{F}}_{4,6(\lambda_1,\lambda_2)}),$
        
        \hskip3cm $(V,{\mathcal{F}}_{4,7(\lambda)}), (V,{\mathcal{F}}_{4,8(\lambda)}), 
        (V,{\mathcal{F}}_{4,9(\lambda)})$, $(V,{\mathcal{F}}_{4,10}); 
        \lambda,\lambda_1, \lambda_2, \lambda_3 \in \mathbb{R}
        \backslash\{0,1\} \Bigl \}.$
        \item[1.2.]
        $\Bigl \{(V,{\mathcal{F}}_{4,11(\lambda_1,\lambda_2,\varphi)})$, 
        $(V,{\mathcal{F}}_{4,12(\lambda,\varphi)})$, 
        $(V,{\mathcal{F}}_{4,13(\lambda,\varphi)})$; 
        $\lambda,\,\,\, \lambda_1, \lambda_2 \in \mathbb{R}
        \backslash\{0\}$; $\varphi\in(0,\pi)\Bigl \}$.
        \item[1.3.]
        $\Bigl \{(V,{\mathcal{F}}_{4,14(\lambda,\mu,\varphi)}); 
        \mu,\lambda\in \mathbb{R}, \mu>0, \varphi\in(0,\pi)\Bigl \}$.
    \end{enumerate}
    We denote these types by \, ${\mathcal{F}}_1,\, {\mathcal{F}}_2,\,
    {\mathcal{F}}_3$ respectively.
    \item [2.] Furthermore, we have
    \begin{enumerate}
        \item[2.1.] The MD5-foliations of the type ${\mathcal{F}}_1$ are
        trivial fibration with connected fibres on the 3-dimensional
        unitary sphere $S^3$.
        \item[2.2.] The MD5-foliations of the types ${\mathcal{F}}_2,\,
        {\mathcal{F}}_3$ can be given by suitable actions of
        $\mathbb{R}^{2}$ on the foliated manifolds
        $V \cong{\mathbb{R} \times \left(\mathbb{R}^4\backslash \{0\} \right)}$ and their graphs are identified with $V\times \mathbb{R}^{2}$ \hfill {$\square$}
    \end{enumerate}
\end{enumerate} 
\end{prop}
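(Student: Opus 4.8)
The plan is to extract, directly from the orbit formulas of Proposition 1.3.1, both the common foliated manifold and the one-parameter flow that generates each foliation, and then to split the fourteen families into three topological types according to the real Jordan structure of $ad_{X_1}$ on the derived ideal $\G^{1}\equiv\mathbb{R}^{4}$. First I would note that in every case the two-dimensional leaf through $F=(\alpha,\beta,\gamma,\delta,\sigma)$ is $\mathbb{R}_{x}\times\Phi^{G}_{a}(\beta,\gamma,\delta,\sigma)$, where $\Phi^{G}_{a}=\exp\!\big(a\cdot ad_{X_1}|_{\G^{1}}\big)$ acts on the last four coordinates and the first coordinate $x$ is a free translation independent of $\alpha$. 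Since Proposition 1.3.1 shows that the singular orbits are exactly the points with $(\beta,\gamma,\delta,\sigma)=0$, we get $V_{G}=\mathbb{R}\times(\mathbb{R}^{4}\setminus\{0\})$ uniformly, which is the manifold $V$; moreover the leaf through $F$ depends only on the $\Phi^{G}$-orbit of $(\beta,\gamma,\delta,\sigma)$, so the entire classification reduces to the qualitative dynamics of the linear flow $\Phi^{G}$ on $\mathbb{R}^{4}\setminus\{0\}$. The three types are cut out by whether $ad_{X_1}$ has only real eigenvalues (families $1$--$10$), exactly one conjugate pair of non-real eigenvalues (families $11$--$13$), or two such pairs (family $14$). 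That each block forms a single topological type $\F_{1},\F_{2},\F_{3}$ will follow from the uniform geometric descriptions below — a common fibration for the first block and a common generating $\mathbb{R}^{2}$-action for the other two — together with Proposition 1.1.1.

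For $\F_{1}$ the target is the trivial fibration over $S^{3}$. Here $\Phi^{G}_{a}$ is built from real one-parameter groups, and the key is to produce a global smooth cross-section of the flow diffeomorphic to $S^{3}$: concretely, a $\Phi^{G}$-invariant submersion $p\colon\mathbb{R}^{4}\setminus\{0\}\to S^{3}$ whose fibres are precisely the flow lines, together with the transit time $\tau(w)$ needed to carry $w$ onto the section. I would then check that $(\alpha,w)\mapsto\big(p(w),(\alpha,\tau(w))\big)$ is a diffeomorphism $V\cong S^{3}\times\mathbb{R}^{2}$ sending each leaf onto a fibre $\{s\}\times\mathbb{R}^{2}$; this realizes $(V,\F_{1})$ as a trivial fibration with connected ($\cong\mathbb{R}^{2}$) fibres over $S^{3}$, and by Proposition 1.1.3 it gives $C^{*}(V,\F_{1})\cong C(S^{3})\otimes\K$.

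For $\F_{2}$ and $\F_{3}$ the target is part (2.2). I would write down the explicit action $\rho_{(t,a)}(\alpha,w)=(\alpha+t,\Phi^{G}_{a}(w))$ of $\mathbb{R}^{2}$ on $V$, verify from Proposition 1.3.1 that its orbits are exactly the two-dimensional K-orbits, and then check that $\rho$ is free (no non-zero $(t,a)$ fixes a point), so that the holonomy is trivial and, as in Proposition 1.1.2, the graph of $(V,\F)$ is $V\times\mathbb{R}^{2}$. What makes these types genuinely non-trivial, and distinct from $\F_{1}$, is that $\Phi^{G}_{a}$ now contains a true rotation (as $\varphi\in(0,\pi)$ forces $\sin\varphi>0$), so leaves spiral and accumulate, the leaf space is non-Hausdorff, and no fibration can exist. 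Finally $\F_{2}$ and $\F_{3}$ are separated by the number of independent rotational blocks — one versus two — which is a topological invariant read off from the winding of generic leaves and their closures.

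The step I expect to be the main obstacle is the global topology in the $\F_{1}$ case: producing, uniformly across the ten real-eigenvalue algebras, a bona fide $S^{3}$ cross-section and proving that the resulting trivialization is a diffeomorphism. This forces one to control the asymptotics of $\Phi^{G}$ on $\mathbb{R}^{4}\setminus\{0\}$ — that every flow line meets the section exactly once and transversally — which is precisely where the eigenvalue data (signs, multiplicities, Jordan blocks) enter and where the identifications collapsing the ten cases onto a single type must be built with care. The freeness check underlying the $\F_{2},\F_{3}$ graph identification is only a computation, but it still needs attention at the special angles and parameters at which the rotational factor could become periodic.
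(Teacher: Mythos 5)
First, a point of comparison: the paper itself offers no proof of this proposition --- it is recalled verbatim from \cite[Theorem 3.2]{VU-HO09} (with the orbit pictures imported from \cite{VU-HO07}) and closed with a $\square$. So your proposal cannot be checked against an internal argument, only against what it would actually take to establish the statement; and there it has genuine gaps. The most serious one is precisely the step you flag as ``the main obstacle'' and then leave unconstructed: the $S^3$ cross-section for type $\F_1$. Your plan --- a $\Phi^G$-invariant submersion $p:\mathbb{R}^4\setminus\{0\}\to S^3$ whose fibres are the flow lines, plus a transit-time function --- requires every orbit of the linear flow $e^{a\,ad_{X_1}}$ to meet a compact hypersurface exactly once and transversally. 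That works when all four eigenvalues of $ad_{X_1}|_{\G^1}$ have the same sign, but the parameter ranges allow mixed signs (e.g.\ $\lambda_1<0$ while the last eigenvalue is $1$ in $\G_{5,4,1(\lambda_1,\lambda_2,\lambda_3)}$), and then the flow on $\mathbb{R}^4\setminus\{0\}$ is hyperbolic: an orbit supported on the negative-eigenvalue coordinates tends to $0$ as $a\to+\infty$, one supported on the positive-eigenvalue coordinates tends to $0$ as $a\to-\infty$, and nearby hyperbola-type orbits accumulate on both. No level set of a proper function is then a global section meeting each orbit once, so your construction breaks down on a large part of the parameter range; whatever identification collapses all ten real-spectrum families onto a single trivial fibration over $S^3$ is the entire content of part 2.1, and the proposal assumes it rather than supplies it.

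Second, two of your auxiliary claims are false as stated. The $\mathbb{R}^2$-actions are not free at the very representatives used later in the paper: for $\lambda_{14}$ with $(\lambda,\mu,\varphi)=(0,1,\tfrac{\pi}{2})$ both factors equal $e^{-ia}$, which is $2\pi$-periodic in $a$, so every stabilizer contains $\{0\}\times 2\pi\mathbb{Z}$ (and similarly for $\lambda_{12}$ on the locus $t=s=0$). Hence ``free, therefore the graph is $V\times\mathbb{R}^2$'' does not apply; the identification of the graph must be argued through the holonomy groupoid itself. Likewise, separating $\F_2$ from $\F_3$ by ``the number of rotational blocks, a topological invariant read off from the winding of generic leaves'' is not an argument: the topology of individual leaves already varies within family 14 as the parameters $(\lambda,\mu,\varphi)$ vary (closed cylinders when both blocks rotate at commensurable speeds with no radial part, planar spirals otherwise), so both the within-type equivalences and the pairwise inequivalence of $\F_1,\F_2,\F_3$ require explicit leaf-preserving homeomorphisms and explicit invariants, respectively --- neither of which the proposal provides. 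The Jordan-type trichotomy is the right way to organize the fourteen families, but as written the proposal is an outline of the theorem rather than a proof of it.
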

As a direct consequence of Propositions 1.1.2, 1.1.3 and 1.3.3, we have the following assertion.

\vskip5mm
\begin{corol}[]{\bf (Analytical description of Connes' C*-algebras of considered MD5-foliations)} 
\begin{enumerate}
   \item[1.] The Connes' C*- algebra of all the MD5-foliations of the type ${\mathcal{F}}_1$ is isomorphic to $C(S^3)\otimes \mathcal{K}$. 
   \item[2.] The Connes' $C^*$-algebra of all the MD5-foliations of the types ${\mathcal{F}}_2, \,{\mathcal{F}}_3$ is isomorphic to the reduced crossed product 
$C_0(V)\rtimes \mathbb{R}^2.$ \hfill{$\square$}
\end{enumerate}
\end{corol}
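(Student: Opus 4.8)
The plan is to obtain both isomorphisms by feeding the geometric data recorded in Proposition 1.3.3 into Torpe's two structural results, Propositions 1.1.2 and 1.1.3. Since Proposition 1.3.3 asserts that the fourteen families fall into exactly three topological types, Proposition 1.1.1 guarantees that the Connes' C*-algebra depends only on the type; it therefore suffices to treat one representative picture per type, which is exactly what the classification provides.

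For part 1, I would invoke Proposition 1.3.3(2.1): every MD5-foliation of the type ${\mathcal{F}}_1$ is a trivial fibration with connected fibres over the 3-dimensional sphere $S^3$. This places us precisely in the hypothesis of Proposition 1.1.3 with base $B = S^3$, so that $C^*(V,{\mathcal{F}}_1) \cong C_0(S^3)\otimes\K$. Adjoining the elementary observation that $S^3$ is compact, whence $C_0(S^3) = C(S^3)$, yields the stated isomorphism $C^*(V,{\mathcal{F}}_1)\cong C(S^3)\otimes\K$.

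For part 2, I would apply Proposition 1.3.3(2.2): each foliation of the type ${\mathcal{F}}_2$ or ${\mathcal{F}}_3$ is given by a suitable action of $\mathbb{R}^2$ on the foliated manifold $V \cong \mathbb{R}\times(\mathbb{R}^4\setminus\{0\})$ whose graph is identified with $V\times\mathbb{R}^2$. This is exactly the situation of Proposition 1.1.2 with $H = \mathbb{R}^2$, and Torpe's identification then delivers $C^*(V,{\mathcal{F}}) \cong C_0(V)\rtimes\mathbb{R}^2$ for both of these types simultaneously.

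I expect no genuine obstacle here: the statement is a bookkeeping assembly of facts already established in the excerpt, matching the geometric hypotheses of Propositions 1.1.2 and 1.1.3 against the concrete descriptions supplied by Proposition 1.3.3. The single explicit verification worth recording is the passage from $C_0(S^3)$ to $C(S^3)$ in part 1 via compactness of the sphere; everything else is direct substitution.
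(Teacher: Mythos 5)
Your proposal is correct and follows exactly the route the paper intends: the corollary is stated there as "a direct consequence of Propositions 1.1.2, 1.1.3 and 1.3.3," with Proposition 1.1.1 implicitly reducing each type to a single representative and compactness of $S^3$ giving $C_0(S^3)=C(S^3)$. No discrepancy with the paper's argument.
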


\section{K-THEORY FOR THE LEAF SPACE OF THE MD5-FOLIATIONS OF TYPES $\mathbf{{\F}_2, \,{\F}_3}$}
\hskip6mm In this section, we study K-theory for the leaf space of all MD5-foliations of the non-trivial types $\F_2, \,\F_3$ and characterize the Connes' C*-algebras of these foliations. In the introduction, we have emphasized that the K-theory for the leaf space of all MD5-foliations of the type $\F_2$ and C*-algebra $C^*(\F_2)$ have studied in \cite{VU-HO10}. However, for the sake of completeness, we will concurrently present here results of both these types $\F_2$ and $\F_3$.  
    
In view of Proposition 1.3.3, all MD5-foliations $(V,{\mathcal{F}}_{4,12(\lambda,\varphi)})$; $\lambda, \,\lambda_1, \,\lambda_2 \in \mathbb{R} \backslash\{0\}$; $\varphi\in(0,\pi)$, are topologically equivalent to each other and they define the type ${\F}_2$. Thus, we need only to choose one envoy among them to describe the structure of the C*-algebra. Namely, we choose the foliation $\left(V,\F_{4,12\left({1,\frac{\pi}{2}} \right)}\right)$ deputising for the type ${\F}_2$. Similarly, all foliations $(V,{\mathcal{F}}_{4,14(\lambda,\mu,\varphi)})$; $\mu,\lambda\in \mathbb{R},\, \mu>0,\, \varphi\in(0,\pi)$, are topologically equivalent to each other and they define the type ${\F}_3$. We have also to choose $\left(V,{\mathcal{F}}_{4,14\left(0,1,\frac{\pi } {2}\right)}\right)$ deputising for the type ${\F}_3$. So, the Connes' C*-algebras of all foliations defining ${\F}_2$ are isomorphic to $C^*\left({V,\F_{4,12\left( {1,\frac{\pi } {2}} \right)}}\right)$ and for brevity, we denote them by $C^*({\F}_2)$.  Similarly, the Connes' C*-algebras of all foliations defining ${\F}_3$ are isomorphic to $C^*\left(V,{\mathcal{F}}_{4,14\left(0,1,\frac{\pi } {2}\right)}\right)$ and we also denote them, for brevity, by $C^*({\F}_3)$.
        
    In Proposition 1.3.3,  we have seen that MD5-foliations $\left({V,\F_{4,12\left( {1,\frac{\pi } {2}} \right)}}\right)$ and $\left(V,{\F}_{4,14\left(0,1,\frac{\pi } {2}\right)}\right)$ can be described by suitable actions of  \,$\mathbb{R}^2 $ on the foliated manifold 
$$V \cong \mathbb{R} \times \left(\mathbb{R}^4 \backslash \{0\} \right) 
\cong \mathbb{R} \times \left(\left({\mathbb{C} \times \mathbb{R}^2} \right)\backslash \{0\} \right)
\cong \mathbb{R} \times \left(\left({\mathbb{C} \times \mathbb{C}} \right)\backslash \{0\} \right).$$
Namely, it is easy to verify that MD5-foliations $\left({V,\F_{4,12\left( {1,\frac{\pi } {2}} \right)}}\right)$ and $\left(V,{\F}_{4,14\left(0,1,\frac{\pi } {2}\right)}\right)$ are given by the following actions ${\lambda}_{12}$ and ${\lambda}_{14}$ of \,$\mathbb{R}^2 $ on $V$ respectively.
\begin{equation}
   {\lambda}_{12}\bigl( {\left( {r, a} \right),\left( {x, y + iz, t, s} \right)} \bigl): = \bigl( {x + r, \left( {y + iz} \right).e^{ - ia} , t.e^a , s.e^a } \bigl)
\end{equation}
for every  $\left( {r, a} \right) \in \mathbb{R}^2 ,{\text{ }}\left( {x, y + iz, t, s} \right) \in V \cong \mathbb{R} \times \left(\left({\mathbb{C} \times \mathbb{R}^2} \right)\backslash \{0\} \right)$.  
\begin{equation}
   {\lambda}_{14}\bigl( {\left( {r, a} \right),\left( {x, y + iz, t +is} \right)} \bigl): = \bigl( {x + r, \left( {y + iz} \right).e^{ - ia} , \left( {t + is} \right).e^{ - ia}} \bigl),
\end{equation}
for every  $\left( {r, a} \right) \in \mathbb{R}^2 ,{\text{ }}\left( {x, y + iz, t + is} \right) \in V \cong \mathbb{R} \times \left(\left({\mathbb{C} \times \mathbb{C}} \right)\backslash \{0\} \right)$.

    Moreover, it follows from Proposition 1.3.3 and Corollary 1.3.4 that 
\begin{equation}
C^*({\F}_2) \cong C_0 \left( V \right)   \rtimes _{{\lambda}_{12}}  \mathbb{R}^2; 
\qquad C^*({\F}_3) \cong C_0 \left( V \right)   \rtimes _{{\lambda}_{14}}  \mathbb{R}^2.
\end{equation}    
 
\subsection{$\mathbf{C^*({\F}_2),\, C^*({\F}_3)}$ as Extensions of C*-Algebras}

\hskip6mm Many foliations are defined by decomposing suitably the given manifold, constructing foliations on the smaller pieces and then patching together to obtain a foliation of the total manifold. If the pieces when included in the total manifold are saturated with respect to the foliation, this may be viewed as patching together bits of leaf space. Translating into the language of C*-algebras, this construction of the foliation $(V, \F)$ gives us an extension of the form (1.1.1). Then, to study the K-theory of the leaf space of this foliation $(V, \F)$, we need to compute the index invariant of $C^*(V,\F)$  with respect to the constructed extension of the form (1.1.1). If considered C*-algebras are isomorphic to the reduced crossed products of the form $C_0(M)\rtimes H$ (see [11]), where $M$ is some locally compact space and $H$ is some Lie group acting on $M$, we can use the Thom-Connes isomorphism (see \cite{CO81}) to compute the connecting maps $\delta_0, \,\delta_1$.

\hskip6mm So, before we study the K-theory for the leaf spaces of MD5-foliations of the types ${\F}_2, \ \, {\F}_3$, we need to choose some suitable saturated open submanifolds in foliated manifold and construct the extensions of the form (1.1.1). 

Let  $V_1, \, W_1, \, V_2, \, W_2, \, V_3, \, W_3$ be the following submanifolds of $V$

\hskip2cm $V_1:\, = \lbrace (x, y, z, t, s) \in V \,\vert \,s \ne 0 \rbrace \cong \mathbb{R} \times \mathbb{R}^2  \times \mathbb{R} \times (\R),$

\hskip2cm $W_1:\, = V \setminus V_1 = \lbrace(x, y, z, t, s) \in V \,\vert \,s = 0 \rbrace \cong \mathbb{R} \times \left( \mathbb{R}^3 \backslash \{0\} \right)  \times \left\{ 0 \right\} \cong \mathbb{R} \times \left( \mathbb{R}^3 \backslash \{0\} \right),$

\hskip2cm $V_2:\, = \lbrace(x, y, z, t, 0) \in W_1 \,\vert \,t \ne 0 \rbrace \cong \mathbb{R} \times \mathbb{R}^2  \times (\R),$

\hskip2cm $W_2:\, = W_1 \setminus V_2 = \lbrace(x, y, z, t, 0) \in W_1 \,\vert \,t = 0 \rbrace \cong \mathbb{R} \times \left( \mathbb{R}^2 \backslash \{0\} \right),$

\hskip2cm $V_3:\, = \lbrace (x, y, z, t, s) \in V \,\vert \,t^2 + s^2 \ne 0 \rbrace \cong 
\mathbb{R} \times \mathbb{R}^2  \times \left( \mathbb{R}^2 \backslash \{0\} \right),$

\hskip2cm $W_3:\, = \, V \setminus V_3 = \, W_2$.

It is easy to see that the action  $\lambda_{12}$ (resp., $\lambda_{14}$) in Formula (2.0.1) (resp., (2.0.2)) preserves the subsets $V_1, \, W_1, \, V_2, \, W_2$ (resp., $V_3,\,  W_3$). 

Let $i_1, \, i_2, \, i_3, \, \mu_1 ,\mu_2, \mu_3$  be the inclusions and the restrictions as follows
\[ \begin{array}{*{20}c}
   {i_1 :C_0 \left( {V_1 } \right) \to C_0 \left( V \right),} \hfill & {i_2 :C_0 \left( {V_2 } \right) \to C_0 \left( {W_1 } \right),} \hfill 
& {i_3 :C_0 \left( {V_3 } \right) \to C_0 \left( V \right),}\\
   {\mu _1 :C_0 \left( V \right) \to C_0 \left( {W_1 } \right),} \hfill & {\mu _2 :C_0 \left( {W_1 } \right) \to C_0 \left( {W_2 } \right)} \hfill 
& {\mu _3 :C_0 \left( V \right) \to C_0 \left( {W_3 } \right),} \\
\end{array} \]
where each function of  $C_0 \left( {V_1 } \right)$  (resp.,  $C_0 \left( {V_2 } \right)$, \, $C_0 \left(V_3 \right)$) is extented  to the one of  $C_0 \left( V \right)$ (resp.,  $C_0 \left( {W_1 } \right)$, \,$C_0 \left( V \right)$) by taking the value of zero outside $V_1 $  (resp., $V_2 $, \, $V_3$).

    It is obvious that $i_1,\, i_2, \, \mu _1, \, \mu _2 $ (resp., $i_3, \, \mu_3$) are  $\lambda_{12}$-equivariant (resp., $\lambda_{14}$-equivariant). Moreover, the following sequences are equivariantly exact:

\begin{equation}
        \xymatrix{0 \ar[r] & C_0(V_1)\ar[r]^{i_1} & C_0(V) \ar[r]^{\mu_1} & C_0(W_1) \ar[r] & 0},
\end{equation}
\begin{equation}
        \xymatrix{0 \ar[r] & C_0(V_2) \ar[r]^{i_2} & C_0(W_1) \ar[r]^{\mu_2} & C_0(W_2) \ar[r] & 0},
\end{equation}
\begin{equation}
        \xymatrix{0 \ar[r] & C_0(V_3)\ar[r]^{i_3} & C_0(V) \ar[r]^{\mu_3} & C_0(W_3) \ar[r] & 0}.
\end{equation}

Now we denote by  $(V_1, \F_{12}), \,(W_1, \F_{12}), \,(V_2, \F_{12}), \, 
(W_2, \F_{12})$ the restrictions of \,$\left(V,\F_{4,12\left(1, \frac{\pi}{2}\right)}\right)$ to $V_1,$ \,$W_1,$ \,$V_2,$ \,$W_2$ respectively. Similarly, the restrictions of 
$\left(V,{\F}_{4,14\left(0, 1, \frac{\pi}{2}\right)}\right)$ to $V_3$ and $W_3$ are denoted by 
$(V_3, \F_{14})$ and $(W_3, \F_{14})$ respectively. 

The following theorem is the first of the main results of the paper.

\begin{thm}[]{\bf ($\mathbf{C^*({\F}_2),\, C^*({\F}_3)}$ as extensions of 
C*-algebras)}
   \begin{enumerate}
      \item[1.] $C^*({\F}_2)$ admits the following repeated extensions
        \begin{equation}\label{gamma1}
            \xymatrix{0 \ar[r] & J_1 \ar[r]^{\hspace{-.4cm}\widehat{i_1}} & C^{*}({\F}_2) \ar[r]^{\hspace{.3cm}\widehat{\mu_1}} & B_1 \ar[r] & 0}, \tag{$\gamma_1$}
        \end{equation}
        \begin{equation}\label{gamma2}
            \xymatrix{0 \ar[r] & J_2 \ar[r]^{\widehat{i_2}} & B_1 \ar[r]^{\widehat{\mu_2}} & B_2\ar[r] & 0}, \tag{$\gamma_2$}
        \end{equation}
     \item[2.] $C^*({\F}_3)$ admits the following extension
        \begin{equation}\label{gamma1}
            \xymatrix{0 \ar[r] & J_3 \ar[r]^{\hspace{-.4cm}\widehat{i_3}} & C^{*}({\F}_3) \ar[r]^{\hspace{.4cm}\widehat{\mu_3}} & B_3 \ar[r] & 0}, \tag{$\gamma_3$}
        \end{equation}
   \end{enumerate}

where 

\hskip2cm    $J_1:  = C^* \left(V_1,\F_{12} \right) \cong C_0 \left( {V_1 } \right) \rtimes _{\lambda_{12}} \mathbb{R}^2  \cong C_0 \left( \mathbb{R}^3  \sqcup \mathbb{R}^3 \right) \otimes \K,$

\hskip2cm    $J_2:  = C^* \left(V_2,\F_{12} \right) \cong C_0 \left( {V_2} \right) \rtimes _{\lambda_{12}} \mathbb{R}^2  \cong C_0 \left(\mathbb{R}^2  \sqcup \mathbb{R}^2  \right) \otimes \K,$

\hskip2cm    $B_1  = C^* \left(W_1,\F_{12} \right) \cong C_0 \left( {W_1 } \right) \rtimes _{\lambda_{12}}  \mathbb{R}^2,$
    
\hskip2cm    $B_2  = C^* \left(W_2,\F_{12} \right) \cong C_0 \left( {W_2 } \right) \rtimes _{\lambda_{12}} \mathbb{R}^2  \cong C_0 \left(\mathbb{R}_ + \right) 
\otimes \K,$
    
\hskip2cm    $J_3  = C^* \left(V_3,\F_{14} \right) \cong C_0 \left( {V_3 } \right) \rtimes _{\lambda_{14}} \mathbb{R}^2  \cong C_0 \left(\mathbb{C} \times \mathbb{R}_+ \right) \otimes \K,$
    
\hskip2cm    $B_3  = C^* \left(W_3,\F_{14} \right) \cong C_0 \left( {W_3 } \right) \rtimes _{\lambda_{14}} \mathbb{R}^2  \cong C_0 \left(\mathbb{R}_+ \right) 
\otimes \K,$\\

and the homomorphismes \, $\widehat{i_1 }, \, \widehat{i_2 }, \, \widehat{i_3 }, \, \widehat{\mu_1 }, \, \widehat{\mu_2}, \, \widehat{\mu_3}$ \, are defined by
        $$\left({\widehat{i_k }f} \right)\left( {r,s} \right) = i_k f\left({r,s} \right), \,\, \left({\widehat{\mu _k }f} \right)\left( {r,s} \right) = \mu _k f\left( {r,s} \right); \,(r, s) \in \mathbb{R}^2, \,k = 1, 2, 3.$$
\end{thm}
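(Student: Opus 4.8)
The plan is to realise each of $(\gamma_1)$, $(\gamma_2)$, $(\gamma_3)$ as an instance of the extension $(1.1.1)$ furnished by Proposition 1.1.5, and then to identify the resulting ideals and quotients concretely by means of Propositions 1.1.2 and 1.1.3. First I would verify that $V_1, V_2, V_3$ are saturated open submanifolds and that $W_1, W_2, W_3$ are their saturated closed complements, by noting that the defining conditions are invariant under the flows $(2.0.1)$ and $(2.0.2)$: in $(2.0.1)$ the coordinates $s$ and $t$ are merely scaled by $e^a$, so the conditions $s\ne0$, $t\ne0$ and $t^2+s^2\ne0$ are preserved, while in $(2.0.2)$ the modulus $|t+is|$ is only rotated, so $t^2+s^2\ne0$ is again preserved. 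Since $\mathbb{R}^2$ is abelian, hence amenable, and since by Proposition 1.3.3(2.2) the graph is globally $V\times\mathbb{R}^2$ — so that every subgraph $G\setminus G'$ over a saturated closed complement is automatically $(V\setminus V')\times\mathbb{R}^2$ — Proposition 1.1.5 applies verbatim to each of the equivariantly exact sequences $(2.0.3)$, $(2.0.4)$, $(2.0.5)$.

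This immediately yields the three extensions, with $J_k=C^*(V_k,\F)$ sitting as an ideal and $B_k=C^*(W_k,\F)$ appearing as the quotient. Proposition 1.1.2 then identifies each of these Connes algebras with the corresponding reduced crossed product of $C_0$ over $\mathbb{R}^2$, exactly as displayed in the statement. The connecting homomorphisms $\widehat{i_k}$ and $\widehat{\mu_k}$ are the maps induced, by functoriality of the crossed product, from the $\lambda_{12}$- (resp. $\lambda_{14}$-) equivariant $*$-homomorphisms $i_k,\mu_k$; unwinding the definition of the induced map on crossed products gives precisely the pointwise formula $(\widehat{i_k}f)(r,s)=i_kf(r,s)$ and $(\widehat{\mu_k}f)(r,s)=\mu_kf(r,s)$.

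The substantive step is the identification of $J_1,J_2,J_3,B_2,B_3$ with algebras of the form $C_0(\cdot)\otimes\mathcal{K}$, for which I would show that on each piece the restricted foliation is a disjoint union of fibrations with connected fibres and then invoke Proposition 1.1.3. On $V_1$ the parameter $a$ in $(2.0.1)$ acts freely and properly, since $s\mapsto se^a$ fixes the sign of $s$ and forces $a=0$ at a fixed point; splitting $V_1$ into its two components $s>0$ and $s<0$, one uses $r$ to remove $x$ and $a$ to normalise $|s|=1$, leaving $(y+iz,t)\in\mathbb{C}\times\mathbb{R}\cong\mathbb{R}^3$, whence $J_1\cong C_0(\mathbb{R}^3\sqcup\mathbb{R}^3)\otimes\mathcal{K}$; the computation on $V_2$ is identical with $t$ playing the role of $s$, producing $\mathbb{R}^2\sqcup\mathbb{R}^2$. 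On $W_2$ (where $t=s=0$, $(y,z)\ne0$) and on $V_3$ the parameter $a$ acts instead by rotation with stabiliser $2\pi\mathbb{Z}$, so the orbits are connected cylinders $\mathbb{R}\times S^1$; the orbit maps $(x,y,z,0,0)\mapsto|y+iz|$ and $(x,y+iz,t+is)\mapsto\big((y+iz)\,\overline{(t+is)}/|t+is|,\,|t+is|\big)$ exhibit these foliations as fibrations over $\mathbb{R}_+$ and $\mathbb{C}\times\mathbb{R}_+$, giving $B_2\cong B_3\cong C_0(\mathbb{R}_+)\otimes\mathcal{K}$ and $J_3\cong C_0(\mathbb{C}\times\mathbb{R}_+)\otimes\mathcal{K}$.

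I expect the main obstacle to be the verification, case by case, that the orbit space is genuinely the claimed Hausdorff manifold and that the orbit map is a locally trivial fibration with connected fibres: this requires exhibiting global invariant slices, checking properness of the $\mathbb{R}^2$-action away from the excluded loci, and correctly tracking the sign components that produce the disjoint unions $\mathbb{R}^3\sqcup\mathbb{R}^3$ and $\mathbb{R}^2\sqcup\mathbb{R}^2$ as well as the half-line $\mathbb{R}_+$ (rather than $\mathbb{R}$) arising from the modulus. Observe that $B_1=C^*(W_1,\F_{12})$ is deliberately \emph{not} reduced to the form $C_0(\cdot)\otimes\mathcal{K}$ here, since $W_1$ still carries a nontrivial foliation; it is kept as the abstract crossed product $C_0(W_1)\rtimes_{\lambda_{12}}\mathbb{R}^2$, and its analysis is precisely what the second extension $(\gamma_2)$ and the later index computation are designed to resolve.
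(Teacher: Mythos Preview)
Your proposal is correct and follows essentially the same route as the paper's own proof: invoke Proposition~1.1.5 on the saturated decompositions $V_1\subset V$, $V_2\subset W_1$, $V_3\subset V$ to obtain the three extensions, identify each piece with a crossed product via Proposition~1.1.2, and then use Proposition~1.1.3 on explicit fibrations to realise $J_1,J_2,B_2,J_3,B_3$ as $C_0(\cdot)\otimes\mathcal{K}$. The paper is slightly terser --- it writes down only the submersion $p_1(x,y,z,t,s)=(y,z,t,\operatorname{sign}s)$ for $J_1$ and says ``the same argument shows'' for the remaining cases --- whereas you supply the orbit maps for $W_2$ and $V_3$ explicitly; both arguments are otherwise identical in structure.
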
    
\vskip1cm
\begin{prove} \end{prove}

We note that the graph of $\left(V_1,\F_{12} \right)$  is given as  $V_1  \times \mathbb{R}^2 $, so by using Proposition 1.1.2, one has 
        $$J_1  = C^* \left( V_1,\F_{12} \right) \cong C_0 \left( {V_1 } \right) \rtimes _{\lambda_{12}}  \mathbb{R}^2.$$  
Similarly, we have
\begin{description}
   \item[] \hskip2cm $B_1 \cong C_0 \left( {W_1 } \right) \rtimes _{\lambda_{12}} \mathbb{R}^2, \quad
  J_2 \cong C_0 \left( {V_2 } \right) \rtimes _{\lambda_{12}} \mathbb{R}^2, \quad
  B_2 \cong C_0 \left( {W_2 } \right) \rtimes _{\lambda_{12}} \mathbb{R}^2,$
   \item[] \hskip2cm $J_3 \cong C_0 \left( {V_3 } \right) \rtimes _{\lambda_{14}} \mathbb{R}^2, \quad
  B_3 \cong C_0 \left( {W_3 } \right) \rtimes _{\lambda_{14}} \mathbb{R}^2.$
\end{description}

Note that $V_1, \, V_3$ and $V_2$ are saturated submanifolds in $\left(V,\F_{4,12\left(1, \frac{\pi}{2}\right)}\right)$, \, $\left(V,{\F}_{4,14\left(0,1,\frac{\pi}{2}\right)}\right)$ and $(W_1, \F_{12})$ respectively. Furthermore, the graphs of all foliations $(W_1, \F_{12}), \,(W_2, \F_{12}), \, (W_3, \F_{14})$ are given as the product of foliated manifolds with $\mathbb{R}^2$. So we obtain extensions $(\gamma_1), \, (\gamma_2)$  and  
$(\gamma_3)$ by using Proposition 1.1.5. 

From the equivariantly exact sequences (2.1.1), (2.1.2), (2.1.3), it follows from (\cite[Lemma 1.1]{CO81}) that Extensions $(\gamma_1), \,(\gamma_2)$ \,(resp., $(\gamma_3)$) can be obtained from reduced crossed products of the C*-algebras in the exact sequences (2.1.1), (2.1.2) \, (resp., (2.1.3)) by $\mathbb{R}^2$.
        
On the other hand, it is easily seen that the foliation $(V_1,\F_{12})$  can be derived from the following submersion
\begin{equation}
p_1 :V_1  \cong \mathbb{R} \times \mathbb{R}^2  \times \mathbb{R} \times (\R)  \longrightarrow \mathbb{R}^3  \sqcup \mathbb{R}^3;\quad
p_1 (x, y, z, t, s):\,= \, (y, z, t, \text{sign}s).
\end{equation}
Hence, in view of Proposition 1.1.3, we get  $J_1  \cong C_0 \left( {\mathbb{R}^3  \sqcup \mathbb{R}^3 } \right) \otimes \K$. 

The same argument shows that
       
$$J_2 \, \cong \, C_0 ( \mathbb{R}^2  \sqcup \mathbb{R}^2 ) \otimes \K, \qquad
B_2 \, \cong  \, C_0 ( \mathbb{R}_+ ) \otimes \K.$$ 
$$J_3  = C^* \left(V_3,\F_{14} \right) \cong C_0 \left(\mathbb{C} \times \mathbb{R}_+ \right) \otimes \K, \qquad
  B_3  = C^* \left(W_3,\F_{14} \right) \cong C_0 \left(\mathbb{R}_+ \right) 
\otimes \K.$$ 

The proof is complete. \hfill{$\square$

\subsection{Computing the Invariant Systems of $\mathbf{C^*({\F}_2)}$ and  $\mathbf{C^*({\F}_3)}$}
\hskip6mm We now study the K-theory for the leaf spaces of MD5-foliations of the types $\F_{2}, \,\F_{3}$ and characterize $C^*(\F_{2}), \,C^*(\F_{3})$ by K-functors. Namely, we will compute the invariant systems of $C^*(\F_{2}), \,C^*(\F_{3})$ in KK-groups of Kasparov (\cite{KAS}). 

Firstly, we recall that Extensions \,$(\gamma_1), \,(\gamma_2), \,(\gamma_3)$ \, define \,$C^*(\F_{2}), \, B_1, \,C^*(\F_{3})$ \, as elements in KK-groups \,$Ext(B_1, J_1), \, Ext(B_2, J_2), \, Ext(B_3, J_3)$ \,respectively. These elements are called the invariants of \,$C^*(\F_{2}), \,C^*(\F_{3})$. Namely, we have the following definition.

\vskip6mm
\begin{definition}
The pair of elements  $\left( \gamma _1, \,\gamma _2 \right)$ \,$($resp., \,the element \, $\gamma _3)$ \,corresponding to the repeated extensions  $(\gamma_1)$, \, $(\gamma_2)$ \,$($resp., \,the extension $(\gamma_3))$ \,in $Ext(B_1, J_1)\,\oplus \,Ext(B_2, J_2)$\,$($resp., $Ext(B_3, J_3))$ \,is called the system of index invariants \,$($resp., the index invariant$)$ of \,$C^*(\F_{2})$ \,$($resp., $C^*(\F_{3}))$ \,and denoted by $Index \,C^*(\F_{2})$ \,$($resp., \,$Index \,C^*(\F_{3}))$.
\end{definition}

As the analyses in the introduction, we have the following remark.

\vskip6mm
\begin{rem}
$Index \,C^*(\F_{2}), \,Index \,C^*(\F_{3})$ determine the so-called ``stable type'' of $C^*(\F_{2}), \,C^*(\F_{3})$ in $Ext(B_1, J_1)\oplus Ext(B_2, J_2)$, 
$Ext(B_3, J_3)$, respectively.
\end{rem}

Now, we present and prove the last main result of the paper.

\vskip6mm
\begin{thm}[]{\bf (Index Invariants of $\mathbf{C^*({\F}_2)}$ and $\mathbf{C^*({\F}_3)}$)}
\begin{enumerate}
   \item[1.] $Index\,C^*(\F_{2})$ = \,$\left( \gamma_1, \,\gamma_2 \right) $,\, where
   \begin{description}
       \item[] \hskip1cm 
       $\gamma_1 \, = \left( {\begin{array}{*{20}c}
                              0 & 1  \\
                              0 & 1  \\
                             \end{array} } \right)$ 
in the KK-group $Ext(B_1, \,J_1) \, \equiv \, Hom(\mathbb{Z}^2, \, \mathbb{Z}^2)$;

      \item[] \hskip1cm
      $\gamma_2 \, = (1, 1)$ 
in the KK-group $Ext(B_2, \,J_2) \, \equiv \,  Hom(\mathbb{Z}, \, \mathbb{Z}^2)$.
   \end{description}
   \item[2.] $Index\,C^*(\F_{3})$ = \,$\gamma_3$,\, where
      $\gamma_3 \, = (0, 1)$ 
in the KK-group $Ext(B_3, \,J_3) \, \equiv \,  Hom(\mathbb{Z}, \, \mathbb{Z}) \oplus \, Hom(\mathbb{Z}, \, \mathbb{Z})$.    
\end{enumerate}
\end{thm}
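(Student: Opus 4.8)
The plan is to read each index invariant as the class, in the Kasparov group $Ext(B_i,J_i)=KK^{1}(B_i,J_i)$, of the corresponding extension $(\gamma_i)$, and to extract it from the connecting maps of its six-term $K$-theory sequence. Every algebra occurring in $(\gamma_1),(\gamma_2),(\gamma_3)$ is a reduced crossed product of a commutative algebra by $\mathbb{R}^{2}$, hence lies in the bootstrap class, and all the relevant $K$-groups will turn out to be free abelian. The Universal Coefficient Theorem therefore collapses to
\[
Ext(B_i,J_i)\;\cong\;Hom\!\bigl(K_0(B_i),K_1(J_i)\bigr)\ \oplus\ Hom\!\bigl(K_1(B_i),K_0(J_i)\bigr),
\]
and under this splitting the class of $(\gamma_i)$ is exactly the pair $(\delta_0,\delta_1)$ consisting of its exponential map $\delta_0\colon K_0(B_i)\to K_1(J_i)$ and its index map $\delta_1\colon K_1(B_i)\to K_0(J_i)$. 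Thus the theorem reduces to computing these two connecting homomorphisms for each extension.

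First I would compute all the $K$-groups by Connes' Thom isomorphism: since each algebra has the form $C_0(X)\rtimes_{\lambda}\mathbb{R}^2$, one has $K_*\!\bigl(C_0(X)\rtimes_{\lambda}\mathbb{R}^2\bigr)\cong K_*\!\bigl(C_0(X)\bigr)$, the grading being preserved because we cross by $\mathbb{R}^2$. Evaluating $K_*(C_0(X))$ on the explicit strata, via Bott periodicity, the suspension coming from the free $x$-direction, and $\mathbb{R}^{n}\setminus\{0\}\simeq S^{n-1}$, gives $K_0(J_1)=0,\ K_1(J_1)=\mathbb{Z}^2$; $K_0(J_2)=\mathbb{Z}^2,\ K_1(J_2)=0$; $K_0(B_1)=\mathbb{Z}^2,\ K_1(B_1)=0$ (from $W_1\simeq\mathbb{R}\times(\mathbb{R}^3\setminus\{0\})$); $K_1(B_2)=\mathbb{Z}$; and $K_0(J_3)=K_1(J_3)=K_0(B_3)=K_1(B_3)=\mathbb{Z}$ (from $V_3\simeq\mathbb{R}^3\times(\mathbb{R}^2\setminus\{0\})$ and $W_3\simeq\mathbb{R}\times(\mathbb{R}^2\setminus\{0\})$). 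Substituting these into the displayed splitting confirms the ambient groups $Hom(\mathbb{Z}^2,\mathbb{Z}^2)$, $Hom(\mathbb{Z},\mathbb{Z}^2)$ and $Hom(\mathbb{Z},\mathbb{Z})\oplus Hom(\mathbb{Z},\mathbb{Z})$ asserted in the statement.

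Next I would reduce the connecting maps to topology. By the naturality of Connes' Thom isomorphism (\cite[Lemma 1.1]{CO81}), which is exactly what manufactured the extensions $(\gamma_i)$ out of the equivariant exact sequences (2.1.1)--(2.1.3) in Theorem 2.1.1, the six-term sequences of $(\gamma_1),(\gamma_2),(\gamma_3)$ are carried onto the six-term sequences of the underlying commutative extensions, with the boundary maps intertwining the Thom isomorphisms. So it suffices to evaluate the boundary maps of the commutative extensions obtained by stratifying $V$ along $\{s=0\}$, then along $\{t=0\}$ inside it, and along $\{t=s=0\}$, by tracking the standard generators. For $(\gamma_1)$ the sphere $\mathbb{R}^3\setminus\{0\}\simeq S^2$ is the common boundary of the half-spaces $\{s>0\}$ and $\{s<0\}$, so $\delta_0$ has rank one, killing the rank generator and sending the Bott generator diagonally into both components, which yields $\gamma_1=\left(\begin{smallmatrix}0&1\\0&1\end{smallmatrix}\right)$. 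For $(\gamma_2)$ the punctured $(y,z)$-plane borders both $\{t>0\}$ and $\{t<0\}$, so $\delta_1$ carries the winding generator of $K_1(B_2)$ to $(1,1)$. For $(\gamma_3)$ the stratum removed from $S^3$ is an unknotted circle, which is null-homotopic in $S^3$; one of the two restriction maps $K_*(C_0(V))\to K_*(C_0(W_3))$ therefore vanishes, forcing one connecting map to be zero and the other an isomorphism, i.e. $\gamma_3=(0,1)$.

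The main obstacle is this last step, the integral generator bookkeeping. The six-term sequences by themselves already pin down the \emph{ranks} of $\delta_0,\delta_1$, but they do not determine the exact integer matrices; one must choose Bott and winding generators compatibly across the adjacent strata and settle the signs and multiplicities with which a boundary class caps into the open pieces on either side. The most delicate point is the asymmetry $(0,1)$ for $\gamma_3$: it rests on the fact that an unknot bounds a disc in $S^3$ and is hence null-homotopic, so that precisely one of the two restriction maps is trivial while the other is onto. Getting these orientations right, rather than merely the ranks, is where the real work lies.
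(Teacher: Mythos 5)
Your proposal is correct and, for the extensions $(\gamma_1)$ and $(\gamma_2)$, it follows essentially the paper's route: identify the Ext-invariant with the connecting maps via \cite[Theorem 4.14]{RO82} (the UCT with free $K$-groups), use naturality of the Thom--Connes and Bott isomorphisms to replace the six-term sequences of the crossed products by those of the underlying commutative extensions, and evaluate the boundary map on the reduced Bott class of the separating sphere. The ``generator bookkeeping'' you correctly flag as the real work is done in the paper by lifting $\widehat{p}$ to $\widetilde{p}(x,y,z)=\widehat{p}(x,y)/\sqrt{1+z^2}$ on $S^3$ and computing the winding numbers of $e^{2\pi i\widetilde{p}_{\pm}}$ on the two half-spaces, which yields $\delta_0([\widehat{p}])=[b]\boxtimes[u_+]+[b]\boxtimes[u_-]$ and hence the matrices $\bigl(\begin{smallmatrix}0&1\\0&1\end{smallmatrix}\bigr)$ and $(1,1)$; your ``diagonal capping'' claim is exactly this computation, just not carried out. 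Where you genuinely diverge is $\gamma_3$. The paper, like you, first notes that exactness of the all-$\mathbb{Z}$ six-term sequence leaves only the two scenarios $(\delta_0,\delta_1)=(0,1)$ or $(1,0)$, but it decides between them by an explicit index computation: it lifts the generator $a=e^{i\varphi}$ of $K_1(C(S^1))$ to a unitary $u\in GL_2^0\left(C(S^3)\right)$ and shows $\delta_1([a])=[uqu^{-1}]-[I_1]\neq 0$. You instead compute the restriction maps $K^j(S^3)\to K^j(S^1)$: the great circle $\{t=s=0\}$ is null-homotopic in $S^3$ (as is every loop there --- unknottedness is irrelevant, since this is homotopy, not isotopy), so the restriction factors through a point, hence vanishes on $K^1$ and is an isomorphism on $K^0$ because it preserves the rank generator; exactness then forces $\delta_0=0$ and $\delta_1$ to be an isomorphism. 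This is a legitimate and in fact shorter argument; what the paper's construction buys is an explicit projection generating the image of $\delta_1$ in $K_0(J_3)$. One caution: you should state explicitly that it is the \emph{odd} restriction map that vanishes and the \emph{even} one that is surjective --- the bare assertion that ``one of the two restriction maps vanishes'' does not by itself distinguish $(0,1)$ from $(1,0)$, and that distinction is the entire content of part 2.
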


To prove this theorem, we need some lemmas as follows.

\vskip6mm
\begin{lemma}[see {\bf \cite[Lemma 4.4]{VU-HO10}}]
Set $I_2 : = C_0 \left(\mathbb{R}^2  \times \left(\R\right)\right)$ and 
$A_2:  = C_0 \left( \mathbb{R}^2 \backslash \{0\} \right)$. Then, the following diagram is commutative
$$\xymatrix{\dots \ar[r] & K_j(I_2)\ar[r] \ar[d]^{\beta_1} & K_j \left( C_0 \left(\mathbb{R}^3 \backslash \{0\} \right) \right) \ar[r] \ar[d]^{\beta_1} & K_j(A_2) \ar[r] \ar[d]^{\beta_1} & K_{j+1}(I_2) \ar[r] \ar[d]^{\beta_1} & \dots\\
            \dots \ar[r] & K_{j+1}\bigl(C_0(V_2)\bigr) \ar[r] & K_{j+1}\bigl(C_0(W_1)\bigr) \ar[r] & K_{j+1}\bigl(C_0(W_2)\bigr)\ar[r] & K_j\bigl(C_0(V_2)\bigr)\ar[r] & \dots}$$
where  $\beta_1 $ is the isomorphism defined in (\cite[Theorem 9.7]{TA}) or in (\cite[Corollary VI.3]{CO81}),  $j \in \mathbb{Z}/2\mathbb{Z}$.
\end{lemma}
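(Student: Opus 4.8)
The plan is to recognise both horizontal rows as the six-term $K$-theory exact sequences of \emph{one and the same} short exact sequence of C*-algebras, the bottom row being the $C_0(\mathbb{R})$-suspension of the top one, and then to conclude by the naturality of the suspension isomorphism $\beta_1$ (equivalently, of the trivial-action instance of the Thom--Connes isomorphism). First I would exhibit the top row explicitly. In the decomposition $\mathbb{R}^3 \setminus \{0\} = \bigl(\mathbb{R}^2 \times (\R)\bigr) \sqcup \bigl(\mathbb{R}^2 \setminus \{0\}\bigr)$ the piece $\{t \ne 0\}$ is open and $\{t = 0,\ (y,z)\ne 0\}$ is its closed complement, so one obtains the short exact sequence
$$0 \to I_2 \xrightarrow{\iota} C_0(\mathbb{R}^3\setminus\{0\}) \xrightarrow{\pi} A_2 \to 0,$$
with $\iota$ extension by zero and $\pi$ restriction; the top row is precisely its associated six-term sequence, the connecting map being $K_j(A_2) \to K_{j+1}(I_2)$.

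Next I would identify the bottom row as the suspension of the top one. Since $W_1 \cong \mathbb{R} \times (\mathbb{R}^3\setminus\{0\})$, $V_2 \cong \mathbb{R} \times \mathbb{R}^2 \times (\R)$ and $W_2 \cong \mathbb{R} \times (\mathbb{R}^2\setminus\{0\})$ all share the common first ($x$-)coordinate on which the maps $i_2,\mu_2$ of $(2.1.2)$ act as the identity, there are canonical identifications $C_0(V_2) \cong C_0(\mathbb{R}) \otimes I_2$, $C_0(W_1) \cong C_0(\mathbb{R}) \otimes C_0(\mathbb{R}^3\setminus\{0\})$ and $C_0(W_2) \cong C_0(\mathbb{R}) \otimes A_2$. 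I would verify that under these identifications $i_2 = \mathrm{id} \otimes \iota$ and $\mu_2 = \mathrm{id} \otimes \pi$, which is immediate from the definitions of extension-by-zero and restriction in the $x$-free variables; hence the sequence $(2.1.2)$ is exactly $\mathrm{id}_{C_0(\mathbb{R})} \otimes$ (the top sequence). Writing $S(\cdot) = C_0(\mathbb{R}) \otimes (\cdot)$, the vertical arrow $\beta_1\colon K_j(D) \to K_{j+1}(SD)$ is the suspension isomorphism; as $D \rtimes_{\mathrm{triv}} \mathbb{R} \cong SD$ by Fourier transform, this is the trivial-action case of the Thom--Connes isomorphism quoted from \cite[Theorem~9.7]{TA} / \cite[Corollary~VI.3]{CO81}.

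The remaining work is the naturality of $\beta_1$, which splits into two kinds of squares. For the two squares built from the induced maps $(i_2)_*$ and $(\mu_2)_*$, commutativity is the naturality of the suspension isomorphism under $*$-homomorphisms: for any $f\colon D \to D'$ one has $(Sf)_* \circ \beta_1 = \beta_1 \circ f_*$, and applying this with $f = \iota, \pi$ gives the claim. The genuinely delicate square is the one containing the connecting (index) homomorphism $K_j(A_2) \to K_{j+1}(I_2)$ set against its suspension $K_{j+1}(C_0(W_2)) \to K_j(C_0(V_2))$. Here I would invoke the fact that $\beta_1$ is a natural transformation of the \emph{entire} six-term exact sequence, so that in particular $\beta_1 \circ \partial = \partial^{S} \circ \beta_1$ on the boundary maps; this is the content of the Thom--Connes naturality statement \cite[Lemma~1.1]{CO81} already used in the proof of Theorem 2.1.2.

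The main obstacle is precisely this last square. Naturality under homomorphisms is formal, but commuting $\beta_1$ past the boundary map requires the full statement that Connes' isomorphism is a morphism of exact sequences (a natural transformation of $\delta$-functors), and one must track the Bott/periodicity sign convention so that the square commutes on the nose rather than merely up to sign; normalising the conventions of \cite[Corollary~VI.3]{CO81} so that this sign is $+1$ is the point that needs care. Granting \cite[Lemma~1.1]{CO81}, all four squares commute, and the proof is complete.
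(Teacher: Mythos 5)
Your proposal is correct and follows essentially the same route as the paper: you build the same short exact sequence $0 \to I_2 \to C_0\left(\mathbb{R}^3\backslash\{0\}\right) \to A_2 \to 0$ from the decomposition of $\mathbb{R}^3\backslash\{0\}$, identify the sequence (2.1.2) as its tensor product with $C_0(\mathbb{R})$ via $C_0(V_2) \cong C_0(\mathbb{R})\otimes I_2$, $C_0(W_1) \cong C_0(\mathbb{R})\otimes C_0\left(\mathbb{R}^3\backslash\{0\}\right)$, $C_0(W_2) \cong C_0(\mathbb{R})\otimes A_2$, and conclude by naturality of the suspension isomorphism --- exactly the paper's argument, which cites \cite[Theorem 9.7, Corollary 9.8]{TA} for the step you spell out as naturality under $*$-homomorphisms plus compatibility with the boundary map. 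Your extra care about the connecting-map square and sign conventions is a finer-grained account of what the paper delegates to that citation, not a different proof.
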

\begin{proof}
Let
$$k_2 :I_2  = C_0 \left( {\mathbb{R}^2  \times \left(\R\right) } \right) \xrightarrow{{}} C_0 \left( \mathbb{R}^3 \backslash \{0\} \right),
     \qquad v_2 : C_0 \left( \mathbb{R}^3 \backslash \{0\} \right) \xrightarrow{{}}A_2  = C_0 \left( \mathbb{R}^2 \backslash \{0\} \right)$$
be the inclusion and restriction defined similarly as $i_1, \, i_2, \, i_3, \, \mu_1 ,\mu_2, \mu_3$ in Subsection 2.1. One gets the exact sequence as follows

\begin{equation}
\xymatrix{0\ar[r] & I_2 \ar[r]^{\hspace{-.7cm}k_2} & C_0 \left( \mathbb{R}^3 \backslash \{0\} \right) \ar[r]^{\hspace{.7cm}v_2} & A_2 \ar[r] & 0}.
\end{equation}

Note that
\begin{description}
   \item \hskip3cm $C_0(V_2) \cong C_0 \left(\mathbb{R} \times \mathbb{R}^2  \times (\R) \right) \cong C_0 \left( \mathbb{R} \right) \otimes I_2,$

   \item \hskip3cm $C_0(W_2) \cong C_0 \left(\mathbb{R} \times \left(\mathbb{R}^2 \backslash \{0\} \right) \right) \cong C_0 \left(\mathbb{R} \right) \otimes A_2,$

   \item \hskip3cm $C_0(W_1) \cong C_0 \left(\mathbb{R} \times \left( \mathbb{R}^3 \backslash \{0\} \right) \right) \cong C_0 \left(\mathbb{R} \right) \otimes C_0 \left( \mathbb{R}^3 \backslash \{0\} \right).$
\end{description}
        
So the extension (2.1.2) can be identified to the following one
\begin{equation}
 \xymatrix{0\ar[r] & C_0(\mathbb{R})\otimes I_2 \ar[r]^{\hspace{-.8cm}id\otimes k_2} & C_0(\mathbb{R})\otimes C_0 \left( \mathbb{R}^3 \backslash \{0\} \right) \ar[r]^{\hspace{.6cm}id\otimes v_2} & C_0(\mathbb{R})\otimes A_2\ar[r] & 0}.
\end{equation}

Now, using \cite[Theorem 9.7, Corollary 9.8]{TA}, we obtain the assertion of the lemma.
\end{proof}

By an argument analogous to that used for the above proof, by using the exact sequence (2.1.1) and diffeomorphisms
$V \cong \mathbb{R} \times \left(\mathbb{R}^4 \backslash \{0\} \right) \cong \mathbb{R} \times \mathbb{R}_+ \times S^3$,\,
$W_1 \cong \mathbb{R} \times \left(\mathbb{R}^3 \backslash \{0\} \right) \cong \mathbb{R} \times \mathbb{R}_+ \times S^2 $ we obtain the following lemma.

\begin{lemma}[see {\bf \cite[Lemma 4.5]{VU-HO10}}]
        Set $I_1  = C_0 \left( {\mathbb{R}^2  \times (\R) } \right)$ and $A_1  = C\left( {S^2 } \right)$. Then, the following diagram is commutative
$$\xymatrix{\dots\ar[r] & K_j(I_1) \ar[r]\ar[d]^{\beta_2} & K_j\bigl(C(S^3)\bigr)\ar[r]\ar[d]^{\beta_2} & K_j(A_1)\ar[r]\ar[d]^{\beta_2} & K_{j+1}(I_1)\ar[r]\ar[d]^{\beta_2} & \dots\\
            \dots \ar[r] & K_j(C_0(V_1))\ar[r] & K_j(C_0(V)) \ar[r] & K_j(C_0(W_1))\ar[r] & K_{j+1}(C_0(V_1))\ar[r] & \dots}$$
where  $\beta _2 $ is the Bott isomorphism,  $j \in \mathbb{Z}/2\mathbb{Z}$.
\hfill {$\square$}
\end{lemma}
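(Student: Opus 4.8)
The plan is to mirror the proof of the preceding lemma, replacing the single suspension by a double one and the affine pieces by the spheres appearing in the quoted diffeomorphisms $V \cong \mathbb{R} \times \mathbb{R}_+ \times S^3$ and $W_1 \cong \mathbb{R} \times \mathbb{R}_+ \times S^2$. First I would set up the top row as the six-term sequence of a restriction. Writing $S^2 = \{\omega \in S^3 : \omega_4 = 0\} \subset S^3$, let $v_1 : C(S^3) \to A_1 = C(S^2)$ be the restriction homomorphism and $k_1 : I_1 \to C(S^3)$ the inclusion of its kernel. Since the open complement $S^3 \setminus S^2$ consists of the two open $3$-cells $\{\omega_4 > 0\}$ and $\{\omega_4 < 0\}$, each homeomorphic to $\mathbb{R}^3$, one identifies $\ker v_1 \cong C_0(S^3 \setminus S^2) \cong C_0\!\left(\mathbb{R}^2 \times (\R)\right) = I_1$, obtaining the short exact sequence
\[
\xymatrix{0 \ar[r] & I_1 \ar[r]^{k_1} & C(S^3) \ar[r]^{v_1} & A_1 \ar[r] & 0},
\]
whose associated six-term $K$-theory sequence is exactly the top row of the diagram.

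Next I would exhibit the bottom extension (2.1.1) as the result of applying $C_0(\mathbb{R}^2) \otimes -$ to this sequence. Using the diffeomorphisms above together with $\mathbb{R} \times \mathbb{R}_+ \cong \mathbb{R}^2$, and $V_1 = \{s \ne 0\} \cong \mathbb{R} \times \mathbb{R}_+ \times (S^3 \setminus S^2)$, I obtain
\[
C_0(V) \cong C_0(\mathbb{R}^2) \otimes C(S^3), \quad C_0(W_1) \cong C_0(\mathbb{R}^2) \otimes A_1, \quad C_0(V_1) \cong C_0(\mathbb{R}^2) \otimes I_1.
\]
The crucial point is that the defining condition $s = 0$ of $W_1$ reads, after passing to polar coordinates $v \mapsto (|v|, v/|v|)$ on $\mathbb{R}^4 \backslash \{0\}$, as $(v/|v|)_4 = 0$; since $|v| > 0$ this constrains the $S^3$-coordinate alone and leaves the $\mathbb{R} \times \mathbb{R}_+$ factor untouched. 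Hence the restriction $\mu_1 : C_0(V) \to C_0(W_1)$ of (2.1.1) is carried, under these identifications, precisely to $\mathrm{id}_{C_0(\mathbb{R}^2)} \otimes v_1$, so that the extension (2.1.1) is identified with the short exact sequence above tensored by $C_0(\mathbb{R}^2)$, in complete analogy with the passage from (2.2.2) to (2.2.3) in the preceding lemma.

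Finally, since tensoring by $C_0(\mathbb{R}^2)$ shifts $K$-degree by two, that is, not at all modulo $2$, the Bott isomorphism $\beta_2$ provides a natural isomorphism between the functors $K_j(-)$ and $K_j\!\left(C_0(\mathbb{R}^2) \otimes -\right)$. Naturality of $\beta_2$ with respect to $k_1$, $v_1$, and the connecting map of the top sequence, which is the content of \cite[Theorem 9.7, Corollary 9.8]{TA} (equivalently \cite[Corollary VI.3]{CO81}), yields the commutativity of every square in the diagram, as claimed.

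I expect the only genuine obstacle to be the verification in the middle paragraph that $\mu_1$ factors as $\mathrm{id} \otimes v_1$; everything else is the naturality of Bott periodicity applied to a tensored short exact sequence, exactly as in the preceding lemma. A small point to check is that the two components of $S^3 \setminus S^2$ correspond to the two signs of $s$, so that the identification $C_0(S^3 \setminus S^2) \cong I_1 = C_0\!\left(\mathbb{R}^2 \times (\R)\right)$ is the one compatible with $\mu_1$.
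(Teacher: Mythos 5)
Your proposal is correct and is essentially the paper's own argument: the paper proves this lemma only by declaring it ``analogous'' to the proof of Lemma 2.2.4, using the exact sequence (2.1.1) together with the diffeomorphisms $V \cong \mathbb{R}\times\mathbb{R}_+\times S^3$ and $W_1 \cong \mathbb{R}\times\mathbb{R}_+\times S^2$ --- which is exactly your identification of (2.1.1) with the equator-restriction extension $0 \to I_1 \to C(S^3) \to C(S^2) \to 0$ tensored by $C_0(\mathbb{R}^2)$, followed by naturality of the Bott isomorphism as in \cite[Theorem 9.7, Corollary 9.8]{TA}. Your middle paragraph (checking that $\mu_1$ becomes $\mathrm{id}\otimes v_1$ and that the two components of $S^3\setminus S^2$ match the signs of $s$) simply supplies the details the paper leaves implicit.
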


Similarly, using the exact sequence (2.1.3) and diffeomorphisms
$V \cong \mathbb{R} \times \left(\mathbb{R}^4 \backslash \{0\} \right) \cong \mathbb{R} \times \mathbb{R}_+ \times S^3$, $W_3 \cong \mathbb{R} \times \left(\mathbb{R}^2 \backslash \{0\} \right) \cong \mathbb{R} \times \mathbb{R}_+ \times S^1,$ $V_3 \cong \mathbb{R} \times \mathbb{R}^2 \times \left(\mathbb{R}^2 \backslash \{0\} \right) \cong \mathbb{R} \times \mathbb{R}^2 \times \mathbb{R}_+ \times S^1$, \,we also get the following lemma.

\vskip6mm
\begin{lemma}
        Set $I_3  = C_0(\mathbb{R}^2 \times S^1)$ and $A_3  = C(S^1)$. Then, the following diagram is commutative
$$\xymatrix{\dots\ar[r] & K_j(I_3) \ar[r]\ar[d]^{\beta_2} & K_j\bigl(C(S^3)\bigr)\ar[r]\ar[d]^{\beta_2} & K_j(A_3)\ar[r]\ar[d]^{\beta_2} & K_{j+1}(I_3)\ar[r]\ar[d]^{\beta_2} & \dots\\
            \dots \ar[r] & K_j(C_0(V_3))\ar[r] & K_j(C_0(V)) \ar[r] & K_j(C_0(W_3))\ar[r] & K_{j+1}(C_0(V_3))\ar[r] & \dots}$$
where  $\beta _2 $ is the Bott isomorphism,  $j \in \mathbb{Z}/2\mathbb{Z}$.
\hfill {$\square$}
\end{lemma}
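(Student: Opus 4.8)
The plan is to reproduce, \emph{mutatis mutandis}, the argument used for Lemmas 2.2.4 and 2.2.5: I will exhibit the lower horizontal row as the image of a single \emph{model} extension living on $S^3$ under the functor $C_0(\mathbb{R}^2)\otimes(-)$, and then invoke the naturality of the Bott isomorphism $\beta_2$ to produce the commuting ladder. First I would fix the model extension. Regard $S^3=\{(y,z,t,s)\in\mathbb{R}^4 : y^2+z^2+t^2+s^2=1\}$ and let $S^1\subset S^3$ be the circle $\{t=s=0\}$ of the $(y,z)$-plane. It is a closed submanifold whose open complement is the open solid torus $S^3\setminus S^1=\{(y,z,t,s)\in S^3 : t^2+s^2\ne0\}\cong\mathbb{R}^2\times S^1$, via $(y+iz,t+is)\mapsto(y+iz,\arg(t+is))$ with $|y+iz|<1$. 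Restriction to $S^1$ and extension by zero then furnish inclusion and restriction maps
\[
k_3:I_3=C_0(\mathbb{R}^2\times S^1)\longrightarrow C(S^3),\qquad v_3:C(S^3)\longrightarrow A_3=C(S^1),
\]
defined exactly as $k_2,v_2$ in the proof of Lemma 2.2.4, and hence the short exact sequence
\[
\xymatrix{0\ar[r] & I_3\ar[r]^{k_3} & C(S^3)\ar[r]^{v_3} & A_3\ar[r] & 0}.
\]

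Next I would pull a \emph{common} Bott factor out of each term of (2.1.3). Using the radial decomposition $\mathbb{R}^4\setminus\{0\}\cong\mathbb{R}_+\times S^3$ (with $\mathbb{R}_+$ the global radius) and grouping the first coordinate line with this half-line into $\mathbb{R}^2:=\mathbb{R}_x\times\mathbb{R}_+$, the diffeomorphisms recorded before the statement give the simultaneous identifications
\[
C_0(V)\cong C_0(\mathbb{R}^2)\otimes C(S^3),\quad C_0(V_3)\cong C_0(\mathbb{R}^2)\otimes I_3,\quad C_0(W_3)\cong C_0(\mathbb{R}^2)\otimes A_3.
\]
The decisive observation is that the splitting $V=V_3\sqcup W_3$ is governed by the homogeneous condition $t^2+s^2\ne0$; since $t^2+s^2=\rho^2(t_0^2+s_0^2)$ under $(y,z,t,s)=\rho\cdot\omega$ with $\omega=(y_0,z_0,t_0,s_0)\in S^3$, this condition depends only on the angular coordinate $\omega$ and hence touches only the $S^3$ tensor factor. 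Consequently the extension (2.1.3) is carried isomorphically onto
\[
\xymatrix{0\ar[r] & C_0(\mathbb{R}^2)\otimes I_3\ar[r]^{id\otimes k_3} & C_0(\mathbb{R}^2)\otimes C(S^3)\ar[r]^{id\otimes v_3} & C_0(\mathbb{R}^2)\otimes A_3\ar[r] & 0},
\]
so that under these identifications $i_3$ becomes $id\otimes k_3$ and $\mu_3$ becomes $id\otimes v_3$.

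Finally I would conclude by naturality. Because $C_0(\mathbb{R}^2)$ is nuclear (indeed commutative), the functor $C_0(\mathbb{R}^2)\otimes(-)$ is exact, so the two rows of the asserted diagram are precisely the long exact $K$-theory sequences of the model extension and of its tensor product with $C_0(\mathbb{R}^2)$. The Bott map $\beta_2:K_j(B)\xrightarrow{\cong}K_j\bigl(C_0(\mathbb{R}^2)\otimes B\bigr)$ is a natural transformation of functors on C*-algebras, so it intertwines the maps induced by $k_3,v_3$ with those induced by $id\otimes k_3,id\otimes v_3$, and likewise commutes with the connecting homomorphisms. This is exactly the commuting ladder of the lemma, and since $K_j(\Sigma^2 B)\cong K_j(B)$ there is no degree shift, matching the stated diagram.

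The one step that is not purely formal is the identification in the second paragraph: verifying that, under a single uniform choice of the $\mathbb{R}^2$-factor, $i_3$ and $\mu_3$ genuinely assume the product form $id\otimes k_3$ and $id\otimes v_3$. This rests on two elementary facts — the radial invariance of $\{t^2+s^2\ne0\}$ and the explicit diffeomorphism $S^3\setminus S^1\cong\mathbb{R}^2\times S^1$ — which together guarantee that $V=V_3\sqcup W_3$ is a product decomposition along the $S^3$ factor alone. Once this is in place, the exactness of $C_0(\mathbb{R}^2)\otimes(-)$ and the naturality of $\beta_2$ are standard and complete the proof.
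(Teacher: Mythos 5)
Your proposal is correct and follows exactly the route the paper intends: the paper gives no separate proof of this lemma, merely stating that it follows ``similarly'' to Lemma 2.2.4 from the exact sequence (2.1.3) and the diffeomorphisms $V\cong\mathbb{R}\times\mathbb{R}_+\times S^3$, $W_3\cong\mathbb{R}\times\mathbb{R}_+\times S^1$, $V_3\cong\mathbb{R}\times\mathbb{R}^2\times\mathbb{R}_+\times S^1$, and your argument is a faithful elaboration of that scheme (model extension $0\to I_3\to C(S^3)\to A_3\to 0$ from the closed circle $\{t=s=0\}\subset S^3$, tensoring with the exact functor $C_0(\mathbb{R}^2)\otimes(-)$, and naturality of Bott). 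Your explicit verification that $\{t^2+s^2\ne 0\}$ is radially homogeneous, so that the splitting $V=V_3\sqcup W_3$ lives entirely in the $S^3$ factor, is precisely the detail the paper leaves implicit, and it is the right one.
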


Before computing the K-groups, we need the following notations. Let $u_\pm:\mathbb{R_\pm} \to S^1 $ be the map
$$u_\pm(z): = e^{2\pi i (\mp z/\sqrt {1 + z^2})}, \, z \in \mathbb{R_\pm}.$$
Note that the class $[u_+]$ 
(resp.,  $[u_-]$) is the generator of  $K_1 \left(C_0(\mathbb{R}_+) \right) \cong \mathbb{Z}$ (resp.,  $K_1 \left(C_0(\mathbb{R}_-) \right) \cong \mathbb{Z}$). Let us consider the matrix valued function  $p: \, \mathbb{R}^2 \backslash \{0\}  \cong S^1 \times \mathbb{R}_+ \longrightarrow M_2(\mathbb{C})$ (resp.,  $\widehat{p}: \, S^2  \cong D^2/S^1 \to M_2(\mathbb{C})$) defined by:

 $$p(x, y)\left(resp., \,\widehat{p}(x, y) \right): = \frac{1}
{2}\left( {\begin{array}{*{20}c}
   {1 - \cos \pi \sqrt {x^2  + y^2 } } & {\frac{{x + iy}}
{{\sqrt {x^2  + y^2 } }}\sin \pi \sqrt {x^2  + y^2 } }  \\
   {\frac{{x - iy}}
{{\sqrt {x^2  + y^2 } }}\sin \pi \sqrt {x^2  + y^2 } } & {1 + \cos \pi \sqrt {x^2  + y^2 } }  \\
 \end{array} } \right).$$

\vskip6mm
    Then $p$ (resp., $\widehat{p}$) is an idempotent of rank 1 for each  $(x, y) \in \mathbb{R}^2 \backslash \{0\}$ (resp.,  $(x, y) \in D^2/S^1 $). Let $[b] \in K_0 \left(C_0(\mathbb{R}^2) \right)$  be the Bott element, and [1] be the generator of  $K_0 \left(C(S^1) \right) \cong \mathbb{Z}$.

\vskip6mm
\begin{lemma}[]{\bf (see \cite[Lemma 4.6]{VU-HO10})}
\begin{enumerate}
   \item[(i)] $K_0(B_1)\cong \mathbb{Z}^2,\ K_1(B_1)=0$,
   \item[(ii)] $K_0(J_2)\cong \mathbb{Z}^2$ is generated by $\varphi_0\beta_1\bigl([b]\boxtimes[u_+]\bigr)$ and $\varphi_0\beta_1\bigl([b]\boxtimes[u_-]\bigr);\ K_1(J_2)=0$,
   \item[(iii)] $K_0(B_2)\cong \mathbb{Z}$ is generated by $\varphi_0\beta_1\bigl([1]\boxtimes[u_+]\bigr);\ K_1(B_2)\cong \mathbb{Z}$ is generated by $\varphi_1\beta_1\bigl([p]-[\varepsilon_1]\bigr)$,
\end{enumerate}
where $\varphi_j$ is the Thom-Connes isomorphism, \, $j\in \lbrace 0, 1 \rbrace$ (see[3]); $\beta_1$ is the isomorphism in Lemma 2.2.4; $\varepsilon_1$ is the constant matrix $\biggl(\begin{matrix}
            1 & 0\\
            0 & 0
        \end{matrix}\biggr)$
and $\boxtimes$ is the external tensor product (see, for example, [4,VI.2]).
\hfill{$\square$}
\end{lemma}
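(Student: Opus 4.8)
The plan is to reduce each of the three computations to the K-theory of a commutative C*-algebra, by first removing the $\mathbb{R}^2$-action with the Thom--Connes isomorphism and then stripping off the Euclidean factors with the Bott/suspension isomorphisms $\beta_1,\beta_2$ of Lemmas 2.2.4 and 2.2.5. Recall that for \emph{any} C*-dynamical system the Thom--Connes isomorphism \cite{CO81} gives $K_j(A\rtimes_\alpha\mathbb{R})\cong K_{j+1}(A)$; applying it twice to the $\mathbb{R}^2$-actions $\lambda_{12},\lambda_{14}$ produces degree-preserving isomorphisms $\varphi_j\colon K_j(C_0(X))\xrightarrow{\cong} K_j\bigl(C_0(X)\rtimes\mathbb{R}^2\bigr)$. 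I would stress at the outset that this step needs no freeness hypothesis, which matters here: $\lambda_{12}$ is free on $V_1$ and $V_2$ but fails to be free along $W_2$ (the rotation factor has stabiliser $2\pi\mathbb{Z}$), so a naive ``$C_0$ of the leaf space'' argument is unavailable and the Thom--Connes route is the correct tool.

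For (i), Theorem 2.1.1 gives $B_1\cong C_0(W_1)\rtimes_{\lambda_{12}}\mathbb{R}^2$, so $\varphi_j$ yields $K_j(B_1)\cong K_j(C_0(W_1))$. Using the diffeomorphism $W_1\cong\mathbb{R}^2\times S^2$, one has $C_0(W_1)\cong C_0(\mathbb{R}^2)\otimes C(S^2)$, and the Bott isomorphism $\beta_2$ of Lemma 2.2.5 identifies $K_j(C_0(W_1))$ with $K_j(A_1)=K_j(C(S^2))$. Since $K_0(C(S^2))\cong\mathbb{Z}^2$ and $K_1(C(S^2))=0$, I conclude $K_0(B_1)\cong\mathbb{Z}^2$ and $K_1(B_1)=0$.

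For (ii), $J_2\cong C_0(V_2)\rtimes_{\lambda_{12}}\mathbb{R}^2$, so $\varphi_j$ gives $K_j(J_2)\cong K_j(C_0(V_2))$. Writing $C_0(V_2)\cong C_0(\mathbb{R})\otimes I_2$, the map $\beta_1$ of Lemma 2.2.4 identifies $K_j(C_0(V_2))$ with $K_{j-1}(I_2)$. Now $I_2=C_0(\mathbb{R}^2)\otimes C_0(\R)=\bigl(C_0(\mathbb{R}^2)\otimes C_0(\mathbb{R}_+)\bigr)\oplus\bigl(C_0(\mathbb{R}^2)\otimes C_0(\mathbb{R}_-)\bigr)$, and the external-product form of the K\"unneth theorem shows $K_1(I_2)\cong\mathbb{Z}^2$, generated by $[b]\boxtimes[u_+]$ and $[b]\boxtimes[u_-]$, while $K_0(I_2)=0$. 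Transporting these classes through $\beta_1$ and $\varphi_0$ gives $K_0(J_2)\cong\mathbb{Z}^2$ with the stated generators and $K_1(J_2)=0$. Part (iii) follows the same route with $W_2$ in place of $V_2$: $\varphi_j$ gives $K_j(B_2)\cong K_j(C_0(W_2))$, and $\beta_1$ identifies this with $K_{j-1}(A_2)$, where $A_2=C_0(\mathbb{R}^2\setminus\{0\})$. Via $\mathbb{R}^2\setminus\{0\}\cong S^1\times\mathbb{R}_+$, K\"unneth yields $K_1(A_2)\cong\mathbb{Z}$ generated by $[1]\boxtimes[u_+]$ and $K_0(A_2)\cong\mathbb{Z}$, for which I would take the class $[p]-[\varepsilon_1]$ of the rank-one idempotent $p$. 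Pushing forward through $\beta_1$ and $\varphi_j$ then gives $K_0(B_2)\cong\mathbb{Z}$ generated by $\varphi_0\beta_1([1]\boxtimes[u_+])$ and $K_1(B_2)\cong\mathbb{Z}$ generated by $\varphi_1\beta_1([p]-[\varepsilon_1])$.

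The group-level isomorphisms are immediate from Thom--Connes and Bott periodicity; the genuinely delicate point, and the one I expect to be the main obstacle, is the explicit identification of the generators in (ii) and (iii). This requires checking that the external products $[b]\boxtimes[u_\pm]$ and $[1]\boxtimes[u_+]$ are precisely the K\"unneth generators of the relevant $K_1$-groups, and — the non-formal part — that the idempotent class $[p]-[\varepsilon_1]$ is a \emph{generator} of $K_0(A_2)\cong\mathbb{Z}$ rather than a proper multiple of one, i.e. that $p$ carries unit winding over the $S^1$-factor of $\mathbb{R}^2\setminus\{0\}$. Granting this, the naturality built into the commutative diagrams of Lemmas 2.2.4 and 2.2.5 and the naturality of $\varphi_j$ propagate the chosen generators through the entire chain of isomorphisms, which completes the proof.
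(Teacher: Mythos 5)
Your proposal is correct and follows essentially the intended route: the paper itself gives no proof of this lemma (it is quoted from \cite[Lemma 4.6]{VU-HO10} and closed with a box), but the chain of isomorphisms you use --- Thom--Connes applied twice to remove the $\mathbb{R}^2$-action, then $\beta_1$ resp.\ $\beta_2$ to strip the Euclidean factors, then K\"unneth/Bott for $C(S^2)$, $I_2=C_0(\mathbb{R}^2\times(\mathbb{R}\setminus\{0\}))$ and $A_2=C_0(\mathbb{R}^2\setminus\{0\})$ --- is exactly the one encoded in the lemma's own description of the generators as $\varphi_j\beta_1(\cdot)$, and your degree bookkeeping is consistent with the diagrams of Lemmas 2.2.4 and 2.2.5. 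Your flagged point that $[p]-[\varepsilon_1]$ must be checked to be a \emph{generator} of $K_0(A_2)$ (unit winding of the Bott-type idempotent) is the only non-formal step, and it holds for the explicit $p$ given in the paper.
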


\begin{lemma}[]{\bf (see \cite [Lemma 4.7]{VU-HO10})}
\begin{enumerate}
    \item[(i)] $K_0\bigl(C^{*}(\F_2)\bigr)\cong\mathbb{Z},\ K_1\bigl(C^*(\F_2)\bigr)\cong\mathbb{Z}$,
    \item[(ii)] $K_0(J_1)=0;\ K_1(J_1)\cong \mathbb{Z}^2$ is generated by $\varphi_1\beta_2\bigl([b]\boxtimes[u_+]\bigr)$ and $\varphi_1\beta_2\bigl([b]\boxtimes[u_-]\bigr)$,
    \item[(iii)] $K_1(B_1)=0;\ K_0(B_1)\cong \mathbb{Z}^2$ is generated by $\varphi_0\beta_2[\widehat{1}]$ and $\varphi_0\beta_2\bigl([\widehat{p}]-[\varepsilon_1]\bigr)$,
\end{enumerate}
where $\widehat{1}$ is unit element in $C(S^2)$; $\varphi_j$ is the Thom-Connes isomorphism, \,$j\in \lbrace 0, 1 \rbrace$; $\beta_2$ is the Bott isomorphism.
\hfill{$\square$}
\end{lemma}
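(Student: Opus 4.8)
The plan is to reduce all three K-theory computations to the topological K-theory of the spheres $S^2,\,S^3$ and of Euclidean spaces, using the two isomorphisms already at hand. Because $\mathbb{R}^2$ is abelian of even dimension, the Thom--Connes isomorphism $\varphi_j$ is degree preserving and gives $K_j(C^*(\F_2)) \cong K_j(C_0(V))$, $K_j(J_1) \cong K_j(C_0(V_1))$ and $K_j(B_1) \cong K_j(C_0(W_1))$ for $j \in \{0,1\}$. The Bott isomorphism $\beta_2$ of Lemma 2.2.5 then identifies these with $K_j(C(S^3))$, $K_j(I_1)$ and $K_j(A_1) = K_j(C(S^2))$, respectively. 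Thus it suffices to compute the three topological groups, to name generators there, and to transport them along the composite $\varphi_j \circ \beta_2$, which is an isomorphism since each factor is.

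For (i) I would invoke $K_j(C^*(\F_2)) \cong K_j(C(S^3))$ and recall that $K^0(S^3) \cong K^1(S^3) \cong \mathbb{Z}$; this immediately yields $K_0(C^*(\F_2)) \cong K_1(C^*(\F_2)) \cong \mathbb{Z}$. The diffeomorphism $V \cong \mathbb{R}^2 \times S^3$ together with Bott periodicity in the $\mathbb{R}^2$-factor provides an independent check.

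For (ii) I would compute $K_*(I_1)$ with $I_1 = C_0(\mathbb{R}^2 \times (\R))$. Splitting $\R = \mathbb{R}_{+} \sqcup \mathbb{R}_{-}$ writes $I_1$ as the direct sum of $C_0(\mathbb{R}^2 \times \mathbb{R}_{+}) \cong C_0(\mathbb{R}^3)$ and $C_0(\mathbb{R}^2 \times \mathbb{R}_{-}) \cong C_0(\mathbb{R}^3)$. Each summand has $K_0 = 0$ and $K_1 \cong \mathbb{Z}$, the generator being the external product $[b]\boxtimes[u_{\pm}]$ of the Bott element $[b] \in K_0(C_0(\mathbb{R}^2))$ with $[u_{\pm}] \in K_1(C_0(\mathbb{R}_{\pm}))$. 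Hence $K_0(I_1) = 0$ and $K_1(I_1) \cong \mathbb{Z}^2$ with generators $[b]\boxtimes[u_{+}]$, $[b]\boxtimes[u_{-}]$; transporting through $\varphi_1\beta_2$ gives $K_0(J_1) = 0$ and the asserted generators of $K_1(J_1)$. This matches the fibration description $J_1 \cong C_0(\mathbb{R}^3 \sqcup \mathbb{R}^3)\otimes\K$ of Theorem 2.1.1.

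For (iii) the model is $A_1 = C(S^2)$, for which $K^1(S^2) = 0$ and $K^0(S^2) \cong \mathbb{Z}^2$ is generated by the class $[\widehat{1}]$ of the unit and by the reduced Bott class $[\widehat{p}] - [\varepsilon_1]$, where $\widehat{p}$ is the rank-one idempotent on $S^2 \cong D^2/S^1$ introduced above. Pushing these through $\varphi_0\beta_2$ yields $K_1(B_1) = 0$ and the stated generators of $K_0(B_1) \cong \mathbb{Z}^2$. The genuinely non-formal points of the whole argument are precisely the explicit generator identifications: verifying that $[b]\boxtimes[u_{\pm}]$ generate $K_1(I_1)$ (and not a proper subgroup) and that $\widehat{p}$ represents the Bott generator of $\widetilde{K}^0(S^2)$. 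Everything else is formal, so I expect this generator bookkeeping --- though routine --- to be the main obstacle.
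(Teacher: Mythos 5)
Your argument is correct and is essentially the intended one: the paper itself gives no proof of this lemma (it is quoted from \cite[Lemma 4.7]{VU-HO10}), but the reduction you use --- Thom--Connes $\varphi_j$ for the $\mathbb{R}^2$-action composed with the Bott isomorphism $\beta_2$ of Lemma 2.2.5, landing in $K_j(C(S^3))$, $K_j(I_1)$ and $K_j(A_1)=K_j(C(S^2))$ --- is exactly the machinery the paper sets up and deploys in the proof of Theorem 2.2.3. The generator identifications you flag ($[b]\boxtimes[u_\pm]$ for $K_1(C_0(\mathbb{R}^3))$ and $[\widehat{p}]-[\varepsilon_1]$ for $\widetilde{K}^0(S^2)$) are the standard Bott/K\"unneth facts and are used in the same form in the source.
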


\begin{rem} It is easily seen that
\begin{enumerate}
    \item[(i)] $K_i\bigl(C_0(\mathbb{R}^2 \times S^1)\bigr) \, \cong \, 
    K_i\bigl(C(S^1)\bigr)\, \cong \, \mathbb{Z}; \, i = 0, 1,$
    \item[(ii)] $K_i\bigl((S^3) \bigr)\,\cong \,\mathbb{Z}; i = 0, 1,$
    \item[(iii)] $K_0\bigl(C(S^1)\bigr)\,\cong\,\mathbb{Z}$ is generated by $\varphi_0\beta_2[1]$ and $K_1\bigl(C(S^1)\bigr)\,\cong\,\mathbb{Z}$ is generated by $\varphi_1\beta_2[Id]$,
\end{enumerate}
where $\varphi_0, \varphi_1$ are the Thom-Connes isomorphisms, $\beta_2$ is the Bott isomorphism.
\end{rem}

We are now prove Theorem 2.2.3.

\begin{proof}[see {\bf Proof of Theorem 2.2.3}]
\renewcommand{\qedsymbol}{}
\end{proof}

{\bf 1. Computation of \, $\mathbf{\gamma_1}$ \,(see \cite[pp. 257--258]{VU-HO10})}

Recall that the extension $(\gamma_1)$ in Theorem 2.1.1 gives rise to a six-term exact sequence
        $$\xymatrix{0 = K_0(J_1)\ar[r] & K_0\bigl(C^*(\F_2)\bigr)\ar[r] & K_0(B_1)\ar[d]^{\delta_0}\\
        0=K_1(B_1)\ar[u]^{\delta_1} & K_1\bigl(C^*(\F_2)\bigr) \ar[l] & K_1(J_1)\ar[l]}$$
        
By \cite[Theorem 4.14]{RO82}, the isomorphism
        $$\Ext(B_1,J_1)\xrightarrow{\cong} \Hom\bigl((K_0(B_1),K_1(J_1)\bigr) \equiv \Hom(\mathbb{Z}^2,\mathbb{Z}^2)$$ 
associates the invariant $\gamma_1\in \Ext(B_1,J_1)$ to the connecting map $\delta_0:K_0(B_1)\rightarrow K_1(J_1)$. Since the Thom-Connes isomorphism commutes with $K-$theoretical exact sequence (\cite[Lemma 3.4.3]{TO}), we have the following commutative diagram $(j\in \mathbb{Z}/2\mathbb{Z})$:
        $$\xymatrix{\dots \ar[r] & K_j(J_1)\ar[r] & K_j\bigl(C^*(\F_2)\bigr)\ar[r] & K_j(B_1)\ar[r]& K_{j+1}(J_1)\ar[r] & \dots\\
        \dots \ar[r] & K_j\bigl(C_0(V_1)\bigr)\ar[r]\ar[u]_{\varphi_j} & K_j\bigl(C_0(V)\bigr)\ar[r]\ar[u]_{\varphi_j} & K_j\bigl(C_0(W_1)\bigr) \ar[r]\ar[u]_{\varphi_j} & K_{j+1}\bigl(C_0(V_1)\bigr)\ar[r]\ar[u]_{\varphi_{j+1}}& \dots}$$
        
In view of Lemma 2.2.5, the following diagram is commutative
        $$\xymatrix{\dots \ar[r] & K_j\bigl(C_0(V_1)\bigr)\ar[r] & K_j\bigl(C_0(V)\bigr)\ar[r] & K_j\bigl(C_0(W_1)\bigr)\ar[r] & K_{j+1}\bigl(C_1(V_1)\bigr)\ar[r] & \dots\\
        \dots \ar[r] & K_j(I_1)\ar[r]\ar[u]_{\beta_2} & K_j\bigl(C(S^3)\bigr)\ar[r]\ar[u]_{\beta_2} & K_j(A_1)\ar[r]\ar[u]_{\beta_2} & K_{j+1}(I_1)\ar[r]\ar[u]_{\beta_2} & \dots}$$
        
Consequently, instead of computing $\delta_0: K_0(B_1)\rightarrow K_1(J_1)$, it is sufficient to compute $\delta_0: K_0(A_1)\rightarrow K_1(I_1)$. Namely, we have to calculate $\delta_0\bigl([\widehat{p}]-[\varepsilon_1]\bigr)=\delta_0\bigl([\widehat{p}]\bigr)$ because $\delta_0\bigl([\varepsilon_1]\bigr)=(0;0)$ and $\delta_0\bigl([\widehat{1}]\bigr)=(0;0)$. By the usual definition (\cite[p.170]{TA}), for $[\widehat{p}]\in K_0(A_1)$, we have  $\delta_0\big([\widehat{p}]\bigr)\, = \,\bigl[e^{2\pi i\widetilde{p}}\bigr]\in K_1(I_1)$ where $\widetilde{p}$ is a preimage of $\widehat{p}$ in (the matrix algebra over) $C(S^3)$, i.e. $v_1 \bigl(\widetilde{p} \bigr) = \widehat{p}$. Concretely, we can choose $\widetilde{p}(x, y, z):\, =\, \dfrac{1}{\sqrt{1+z^2}}.\,\widehat{p}(x, y)\, for\,(x, y, z)\in S^3$. Let $\widetilde{p}_+$ (resp. $\widetilde{p}_-$) be the restriction of $\widetilde{p}$ on $\mathbb{R}^2\times \mathbb{R}_+$ (resp. $\mathbb{R}^2\times\mathbb{R}_-$). Then we have
\begin{multline}
\delta_0\bigl([\widehat{p}]\bigr)\, = \,\bigl[e^{2\pi i\widetilde{p}}\bigr] \, = \,\bigl[e^{2\pi i\widetilde{p}_+}\bigr] + \bigl[e^{2\pi i\tilde{p}_-}\bigr]\\
\in K_1\Bigl(C_0\bigl(\mathbb{R}^2\bigr)\otimes C_0\bigl(\mathbb{R}_+\bigr)\Bigr)\oplus K_1\Bigl(C_0\bigl(\mathbb{R}^2\bigr)\otimes C_0\bigl(\mathbb{R}_-\bigr)\Bigr)\, = \, K_1(I_1).
\end{multline}
        
By \cite[Section 4]{TA}, for each function $f:\mathbb{R}_{\pm}\rightarrow Q_n\widetilde{C_0\bigl(\mathbb{R}^2\bigr)}$ such that $\displaystyle \lim_{x\rightarrow 0}f(t)=\lim_{x\rightarrow \pm\infty}f(t)$, where $Q_n\widetilde{C_0\bigl(\mathbb{R}^2\bigr)}=\Bigl\{a \in M_n\widetilde{C_0\bigl(\mathbb{R}^2\bigr)}, e^{2\pi ia}=Id\Bigr\}$, the class $[f]\in K_1\bigl(C_0(\mathbb{R}^2)\otimes C_0(\mathbb{R}_\pm)\bigr)$ can be determined by $[f]=W_f.[b]\boxtimes[u_\pm]$, where $\displaystyle W_f=\dfrac{\mp1}{2\pi i}\int_{\mathbb{R}_\pm} Tr\bigl(f'(z)f^{-1}(z)\bigr)dz$\, is the winding number of $f$.

By simple computation, we get $\delta_0\bigl([p]\bigr)=[b]\boxtimes[u_+]+[b]\boxtimes [u_-]$. Thus 
$\gamma_1 \, = \,
\left( {\begin{array}{*{20}c}
                              0 & 1  \\
                              0 & 1  \\
                             \end{array} } \right)$ \, 
$\in \Hom_{\mathbb{Z}}(\mathbb{Z}^2,\mathbb{Z}^2)$.

{\bf 2. Computation of \, $\mathbf{\gamma_2}$ \,(\cite[p.258]{VU-HO10})}

The extension $(\gamma_2)$ gives rise to a six-term exact sequence
        $$\xymatrix{K_0(J_2)\ar[r] & K_0(B_1) \ar[r] & K_0(B_2)\ar[d]^{\delta_0}\\
        K_1(B_2)\ar[u]_{\delta_1} & K_1(B_1)\ar[l] & K_1(J_2)=0\ar[l]}$$
        
By \cite[Theorem 4.14]{RO82}, $\gamma_2 = \delta_1 \in \Hom\bigl(K_1(B_2),K_0(J_2)\bigr)=\Hom_{\mathbb{Z}}(\mathbb{Z},\mathbb{Z}^2)$. Similarly to the computation of $\gamma_1$, taking account of Lemmas 2.2.4 and 2.2.7, we have the following commutative diagram 
        $$\xymatrix{\dots \ar[r] & K_j(J_2)\ar[r] & K_j(B_1)\ar[r] & K_j(B_2)\ar[r] & K_{j+1}(J_2)\ar[r] & \dots\\
        \dots \ar[r] & K_j\bigl(C_0(V_2)\bigr)\ar[r] \ar[u]^{\varphi_j}& K_j\bigl(C_0(W_1)\bigr)\ar[r]\ar[u]^{\varphi_j} & K_j\bigl(C_0(W_2)\bigr)\ar[r]\ar[u]^{\varphi_j} & K_{j+1}\bigl(C_0(V_2)\bigr)\ar[r]\ar[u]^{\varphi_{j+1}} & \dots\\
        \dots \ar[r] & K_{j-1}(I_2)\ar[r]\ar[u]^{\beta_1} & K_{j-1}\bigl(C_0(\mathbb{R}^3\backslash \{0\})\bigr) \ar[r]\ar[u]^{\beta_1} & K_{j-1}(A_2)\ar[r]\ar[u]^{\beta_1} & K_j(I_2)\ar[r]\ar[u]^{\beta_1}& \dots}$$
where $j\in \mathbb{Z}/2\mathbb{Z}$.
        
Thus we can compute $\delta_0: K_0(A_2)\rightarrow K_1(I_2)$ instead of $\delta_1: K_1(B_2)\rightarrow K_0(J_2)$. By the proof of Lemma 2.2.7, we have to define $\delta_0\bigl([p]-[\epsilon_1]\bigr)=\delta_0\bigl([p]\bigr)$ (because $\delta_0\bigl([\epsilon_1]\bigr)=(0,0)$). The same argument as above, we get $\delta_0\bigl([p]\bigr)=[b]\boxtimes[u_+]+[b]\boxtimes[u_-]$. Thus $\gamma_2=(1,1)\in \Hom_{\mathbb{Z}}(\mathbb{Z},\mathbb{Z}^2)\cong \mathbb{Z}^2$. 

{\bf 3. Computation of \, $\mathbf{\gamma_3}$}

The extension $(\gamma_3)$ gives rise to a six-term exact sequence
\begin{equation}
\xymatrix{K_0(J_3)\ar[r] & K_0 \left(C^*(\F_3)\right) \ar[r] & K_0(B_3)\ar[d]^{\delta_0}\\
        K_1(B_3)\ar[u]^{\delta_1} & K_1 \left(C^*(\F_3)\right) \ar[l] & K_1(J_3)\ar[l]}
\end{equation}
                
By \cite[Theorem 4.14]{RO82}, we have 
$$\gamma_3 \, = \, (\delta_0, \,\delta_1) \, \in Ext (B_3, \, J_3) \cong \Hom_{\mathbb{Z}} \bigl(K_0(B_3),K_1(J_3)\bigr)\,\oplus \,\Hom_{\mathbb{Z}}\bigl(K_1(B_3), \,K_0(J_3)\bigr).$$ 

Similarly to the computation of $\gamma_1$, taking account of Lemma 2.2.6 and Remark 2.2.9, for $j\in \mathbb{Z}/2\mathbb{Z}$, we have the following commutative diagram 
        $$\xymatrix{\dots \ar[r] & K_j(J_3)\ar[r] & K_j \left(C^*(\F_3)\right)\ar[r] & K_j(B_3)\ar[r] & K_{j+1}(J_3)\ar[r] & \dots\\
        \dots \ar[r] & K_j\bigl(C_0(V_3)\bigr)\ar[r] \ar[u]^{\varphi_j}& K_j\bigl(C_0(V)\bigr)\ar[r]\ar[u]^{\varphi_j} & K_j\bigl(C_0(W_3)\bigr)\ar[r]\ar[u]^{\varphi_j} & K_{j+1}\bigl(C_0(V_3)\bigr)\ar[r]\ar[u]^{\varphi_{j+1}} & \dots\\
        \dots \ar[r] & K_{j}(I_3)\ar[r]\ar[u]^{\beta_2} & K_{j}\bigl(C({S}^3)\bigr) \ar[r]\ar[u]^{\beta_2} & K_{j}(A_3)\ar[r]\ar[u]^{\beta_2} & K_{j + 1}(I_3)\ar[r]\ar[u]^{\beta_2}& \dots}$$
        
Thus we can compute the pair $(\delta_0, \delta_1) \in 
Hom_{\mathbb{Z}}\bigl(K_0(A_3), K_1(I_3)\bigr) \oplus 
Hom_{\mathbb{Z}}\bigl(K_1(A_3), K_0(I_3)\bigr)$ instead of the pair $(\delta_0, \delta_1) \in Hom_{\mathbb{Z}}\bigl(K_0(B_3), K_1(J_3)\bigr) \oplus 
Hom_{\mathbb{Z}}\bigl(K_1(B_3), K_0(J_3)\bigr)$. By Remark 2.2.9, the K-theory exact sequence (2.2.4) becomes
\begin{equation}
\xymatrix{\mathbb{Z}\ar[r] & \mathbb{Z} \ar[r] & \mathbb{Z}\ar[d]^{\delta_0}\\
        \mathbb{Z}\ar[u]^{\delta_1} & \mathbb{Z} \ar[l] & \mathbb{Z}\ar[l]}
\end{equation}
Moreover, it follows from the exactness of (2.2.5) that this exact sequence is one of the following ones
\begin{equation}
\xymatrix{\mathbb{Z}\ar[r]^0 & \mathbb{Z} \ar[r]^1 & \mathbb{Z}\ar[d]^{\delta_0 = 0}\\
        \mathbb{Z}\ar[u]^{\delta_1 = 1} & \mathbb{Z} \ar[l]^0 & \mathbb{Z}\ar[l]^1}
\end{equation}
or
\begin{equation}
\xymatrix{\mathbb{Z}\ar[r]^1 & \mathbb{Z} \ar[r]^0 & \mathbb{Z}\ar[d]^{\delta_0 = 1}\\
        \mathbb{Z}\ar[u]^{\delta_1 = 0} & \mathbb{Z} \ar[l]^1 & \mathbb{Z}\ar[l]^0}
\end{equation} 

Now, in order to complete the computation of \,$\gamma_3$, we need consider homomorphism $\delta_1$. Namely, we consider $a = e^{i\varphi} \in GL_1 \left(C(S^1)\right)$, set $b:\, = a^{-1}$. Then, one has
$$ a\oplus b = 
\left( {\begin{array}{*{20}c}
                              e^{i\varphi} & 0  \\
                              0 & e^{- i\varphi}  \\
                             \end{array} } \right) \, 
\in GL_2^0\left( C(S^1) \right).$$ 

Let $u$ be the function of matrix values defined as follows
$$u(\theta_1, \theta_2, \varphi):\, =
\left( {\begin{array}{*{20}c}
e^{i\varphi}.e^{i\theta_1}.\cos\theta_2 & - \sin\theta_2 \\
\sin\theta_2 & e^{- i\varphi}.e^{- i\theta_1}.\cos\theta_2 \\
                             \end{array} } \right) \, 
\in GL_2^0\left( C(S^3) \right)$$ 
for every $(\theta_1, \theta_2, \varphi) \equiv 
(\cos{\theta_1}.\cos{\theta_2}\cos{\varphi}, \,\cos{\theta_1}.\cos{\theta_2}\sin{\varphi}, \,\cos{\theta_1}.\sin{\theta_2}, \,\sin{\theta_1}) \in S^3$.
It is easily seen that $u$ is one preimage of $a\oplus b$ in $GL_2^0\left( C(S^1) \right)$.
Set $q:\, = I_1 \oplus O_1 = 
\left( {\begin{array}{*{20}c}
                              1 & 0  \\
                              0 & 0  \\
                             \end{array} } \right)$ \, and \,$p:\, = uqu^{-1}$,
where $I_n, \,O_n$ are the identity and zero in $Mat_n(\mathbb{R})$, respectively 
($0 < n \in \mathbb{N})$.
Then $p \in P_2 {\left(C_0(\mathbb{R}^2 \times S^1)\right)}^{+}$ \,and \,$rank(p) = 1$. By the usual definition of $\delta_1$ (see \cite[Chapter 9]{RO-LA-LA}), we get 
$\delta_1([a]) = [p] - [I_1] \ne 0 \in K_0 \left(C_0(\mathbb{R}^2 \times S^1)\right)$. So the K-theory exact sequence (2.2.4) becomes 
\begin{equation}
\xymatrix{\mathbb{Z}\ar[r]^0 & \mathbb{Z} \ar[r]^1 & \mathbb{Z}\ar[d]^{\delta_0 = 0}\\
        \mathbb{Z}\ar[u]^{\delta_1 = 1} & \mathbb{Z} \ar[l]^0 & \mathbb{Z}\ar[l]^1}
\end{equation}
The proof is complete. \hfill {$\square$}

\subsection*{Acknowledgements} The authors would like take this opportunity to thank Prof. DSc. Do Ngoc Diep for his excellent advice and financial support.  They also wish to thank the University of Economics and Law, Vietnam National University-Ho Chi Minh City and the University of Education at Ho Chi Minh City for financial supports.

\end{document}